\def\newtheorems{\newtheorem{theorem}{Theorem}[section]
                 \newtheorem{corollary}[theorem]{Corollary}
                 
                 \newtheorem{lemma}[theorem]{Lemma}
                 \newtheorem{claim}[theorem]{Claim}

                 \newtheorem{fact}[theorem]{Fact}
                 \newtheorem{definition}[theorem]{Definition}
                 \newtheorem{example}[theorem]{Example}

                 \newtheorem{definition and lemma}[theorem]{Definition and
                 Lemma}}
\newcommand{\gb}{{\frak b}}
\newcommand{\B}{\mathcal{B}}
\newcommand{\F}{\mathcal{F}}
\newcommand{\ON}{\mathcal{ON}}
\newcommand{\Q}{\mathcal{Q}}
\newcommand{\R}{\mathcal{R}}
\renewcommand{\S}{\mathcal{S}}
\newcommand{\T}{\mathcal{T}}
\newcommand{\U}{\mathcal{U}}
\newcommand{\V}{\mathcal{V}}
\newcommand{\W}{\mathcal{W}}
\newcommand{\cf}{\mbox{\rm cf}}
\newcommand{\dom}{\mbox{\rm dom}}
\newcommand{\ran}{\mbox{\rm ran}}
\newcommand{\type}{\mbox{\rm type}}
\newcommand{\ord}{\mbox{\rm ord}}
\begin{document}
\title{Orders of $\pi$-bases}
\author { Isaac Gorelic}
\address{Government of Canada, Ottawa, Ontario, Canada}

\email{isaacgorelic@yahoo.com }
\thanks{Research supported by NSERC of Canada.}

\date{\today}
\subjclass{54A25, 03E10, 03E75, 54A35} \keywords{Shapirovskii
$\pi$-base, point-countable $\pi$-base, free sequences, canonical
form for ordinals}

\begin{abstract}
We extend the scope of B. Shapirovskii's results [6] on the order
of $\pi$-bases in compact spaces and answer some questions of V.
Tkachuk in [7].
\end{abstract}

\maketitle
\section*{Introduction}
The notion of $\pi$-base is an essential tool for studying the
internal structure of a topological space as well as its external
properties (embeddings, functions and the like); this was
established, primarily, in the work of Boris Shapirovskii in the
1970s ([5] and [6] containing major discoveries). In this paper we
attempt to show the full natural scope of his ideas regarding the
order of $\pi$-bases.

In Section 1, we decipher and refine the method of induction used
by Shapirovskii in Section 3 of [6]. We develop a purely
set-theoretic technique which will be applied to generalize
Shapirovskii's results and answer some questions of Tkachuk; this
technique provides a formalism interesting in itself and apt to
have other applications.

In Section 2, using the results of Section 1, we describe a
canonical form for $\pi$-bases in regular spaces and prove that
canonical $\pi$-bases always exist. In our Lemma 2.4 we give a
characterization of free sequences and with its help we derive a
series of new results, starting with the central Theorem 2.6. By
carefully numbering the points of our argument, we have tried to
achieve ``sufficiency with precision," and, hopefully, to avoid
accidental and irrelevant conditions in our statements. This also
gave us reliable guidance as to where exactly to look for
counterexamples when the sufficiency of a weaker condition was in
question.

Section 3 deals with the natural question as to whether or not the
assumptions in our theorems could be further relaxed. We give some
examples to the contrary which also solves three problems of V.
Tkachuk from [7].

The idea for this paper originated from the observation that our
Lemma 2.4 could be used, in place of final compactness (that is,
small $L(X)$), even in the original Shapirovskii argument, made
for compact spaces.

The author is grateful to Vladimir Tkachuk for his stimulating
influence and to Stevo Todor\v{c}evi\'c for advice. Special thanks
go to the referee who suggested numerous improvements to the
paper.

 We have used [1] and [2] as general references for definitions and
notation.  $\ON$ is the class of ordinal numbers. Additions and
multiplications are ordinal operations. We have written
$[\gamma,\delta )$, or $\delta \setminus \gamma$, for $\{\alpha :
\gamma \leq \alpha < \delta\}$. We have denoted by ${\mathcal
{T}}_X$ the family of all non-empty open subsets of a topological
space $X$.
\medskip

\section{Canonical $\kappa$-functions}
\begin{definition}
For an infinite cardinal $\kappa$, a {\bf canonical
$\kappa$-function} is a class function    $$\phi = \phi_\kappa :
\ON \longrightarrow \ [\ON \times \kappa ]^{<\omega}$$ satisfying
the following two conditions:
\begin{enumerate}
\item
For every ordinal $\alpha$, $\phi (\alpha ) \subseteq \alpha
\times\kappa $.
\item
For every ordinal $\delta$ of the form $\delta  =
\kappa\cdot\epsilon$ there is $\gamma(\delta) < \delta$ such that
$[\ [\gamma(\delta),\delta )\times \kappa ]^{<\omega} \subseteq
\phi``\delta$.
\end{enumerate}
\end{definition}

\begin{definition}
A {\bf $\tau$-strong canonical $(\kappa , \lambda) $-function} is
a function $$\psi : \lambda\longrightarrow \ [\lambda \times
\kappa ]^{\tau}$$ satisfying the following two conditions:
\begin{enumerate}
\item
$(\forall \alpha \in \dom (\psi))$  $\psi (\alpha ) \subseteq
\alpha \times\kappa $.
\item
For every ordinal $\delta \leq\lambda$ with $\cf(\delta ) =
\kappa^+$ there is $\gamma(\delta) < \delta$ such that $[\
[\gamma(\delta),\delta )\times \kappa ]^{\tau} \subseteq
\psi``\delta$.
\end{enumerate}
\end{definition}

\begin{definition and lemma}
Let $\kappa$ be an infinite cardinal. Define a class-function
$\sigma = \sigma_\kappa : \ON \longrightarrow \ON$ by the
following rule:
\begin{itemize}
\item
$\sigma(0) = 0$,
\item
$\sigma(1) = \kappa$,
\item
$\sigma(\alpha + 1) = \sigma (\alpha) + |\sigma (\alpha)|$, for
$\alpha > 0$,
\item
$\sigma(\beta) = {\rm sup} \{\sigma (\alpha):\alpha < \beta \}$,
for $\beta$ limit.
\end{itemize}
Then every ordinal $\delta$ has the following unique
$\sigma_\kappa$-normal form: $$ \delta = \sigma (\alpha_0 ) +
\sigma ( \alpha_1 )+ \dots + \sigma (\alpha_{n-1}) + \Delta,$$
where $n \in \omega$, $|\sigma (\alpha_0)|
> |\sigma(\alpha_1)| > |\sigma (\alpha_2)| > \dots  > |\sigma(\alpha_{n-2})|
> |\sigma (\alpha_{n-1})|, \alpha_{n-1} > 0$, and $\Delta < \kappa$.
\end{definition and lemma}
\begin{proof}
To visualize, we partition $\ON$ into intervals $[0],
[1,\kappa^+),\dots, [\mu ,\mu^+), \dots$. This is the finest
partition of $\ON$ into intervals that are closed under $\sigma$.
Then we choose descending $\alpha_i$ from different intervals,
excluding the first.

Existence. Since $\sigma$ is increasing continuous, and $\sigma(1)
= \kappa$, if $\delta \geq \kappa$, then $\exists ! \alpha_0 > 0 $
such that $\sigma(\alpha_0) \leq \delta < \sigma (\alpha_0 + 1)$.
Similarly, if $\type(\delta \setminus \sigma(\alpha_0)) \geq
\kappa$, then $\exists ! \alpha_1> 0$ such that $\sigma(\alpha_1)
\leq \type(\delta \setminus \sigma(\alpha_0)) < \sigma(\alpha_1
+1)$. Eventually, we'll get to $\alpha_{n-1} > 0$ (if any,
otherwise set $n=0$) such that $\sigma( \alpha_{n-1}) \leq \type
(\delta \setminus (\sigma (\alpha_0 ) + \sigma ( \alpha_1 )+ \dots
+ \sigma (\alpha_{n-2})) < \sigma(\alpha_{n-1} +1)$, but now
$\type (\delta \setminus (\sigma (\alpha_0 ) + \sigma ( \alpha_1
)+ \dots + \sigma (\alpha_{n-1})) < \kappa$. Put $\Delta = \type
(\delta \setminus (\sigma (\alpha_0 ) + \sigma ( \alpha_1 )+ \dots
+ \sigma (\alpha_{n-1}))$.

Uniqueness is now easily proved by induction on the length of the
normal form. It follows that the lexicographic ordering of the
$\sigma$-normal forms (that is of the ordinal sequences
$<\alpha_0,\alpha_1,\dots,\alpha_{n-1},\Delta>$) coincides with
the natural order of their values in $\ON$, but we will not need
this explicitly.
\end{proof}

\begin{definition}
Define a total pressing-down (save for $\gamma (0) =0$)
class-function $\gamma = \gamma_\kappa: \ON \longrightarrow \ON$
as follows:

For every ordinal $\delta$ with the $\sigma_\kappa$-normal form
$\delta = \sigma (\alpha_0 ) + \sigma ( \alpha_1 )+ \dots +
\sigma(\alpha_{n-2}) + \sigma (\alpha_{n-1}) + \Delta$, set

\begin{itemize}
\item
$\gamma (\delta ) = 0$, if $n=0$,
\item
$\gamma (\delta ) =  \sigma (\alpha_0 ) + \sigma ( \alpha_1 )+
\dots + \sigma(\alpha_{n-2})$, otherwise.
\end{itemize}
\end{definition}

\begin{theorem}
For every infinite cardinal $\kappa$ there is a canonical
$\kappa$-function $\phi = \phi_\kappa$.
\end{theorem}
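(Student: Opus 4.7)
The plan is to construct $\phi=\phi_\kappa$ by transfinite recursion along $\ON$, taking the pressing-down function $\gamma_\kappa$ from Definition~1.4 as the witness $\gamma(\delta)$ required in condition (2). The construction proceeds interval by interval over the partition of $\ON$ into $I_\alpha=[\sigma(\alpha),\sigma(\alpha+1))$ for $\alpha\geq 1$; by the rule $\sigma(\alpha+1)=\sigma(\alpha)+|\sigma(\alpha)|$, each $I_\alpha$ has cardinality $\mu_\alpha:=|\sigma(\alpha)|$, which matches $|[\sigma(\alpha+1)\times\kappa]^{<\omega}|$. Thus there is exactly enough room inside $I_\alpha$ to list, as values of $\phi$, all finite subsets of $\sigma(\alpha+1)\times\kappa$ that we will need.

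At stage $\alpha$, suppose $\phi$ has been defined on $[0,\sigma(\alpha))$ so as to satisfy (1), and so that every critical $\delta<\sigma(\alpha)$ (i.e., $\delta$ a multiple of $\kappa$) has its requirement in (2) satisfied. I would choose any bijection $e_\alpha\colon I_\alpha\to[\sigma(\alpha+1)\times\kappa]^{<\omega}$ subject to two constraints: first, $e_\alpha(\beta)\subseteq\beta\times\kappa$ for every $\beta\in I_\alpha$, to preserve (1); second, for every critical $\delta\in I_\alpha$ with $\delta>\sigma(\alpha)$, all finite subsets of $[\gamma_\kappa(\delta),\delta)\times\kappa$ appear as $e_\alpha(\beta)$ for some $\beta\in[\sigma(\alpha),\delta)$. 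The first constraint is trivially satisfiable (each finite $F$ has cofinitely many admissible $\beta$'s in $I_\alpha$); the second is feasible by a straightforward counting argument, since the $\sigma_\kappa$-normal form of Definition and Lemma~1.3 yields $|[\gamma_\kappa(\delta),\delta)|=|\sigma(\alpha_{n-1})|<\mu_\alpha$ whenever $\delta\in I_\alpha$ is critical with $\delta>\sigma(\alpha)$. Setting $\phi\restriction I_\alpha=e_\alpha$ then preserves both (1) and (2) through stage $\sigma(\alpha+1)$.

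The main obstacle is the boundary case $\delta=\sigma(\alpha)$ itself. For $\alpha\geq 1$ the ordinal $\sigma(\alpha)$ is critical (a multiple of $\kappa$) with $\gamma_\kappa(\sigma(\alpha))=0$, so the entire family $[\sigma(\alpha)\times\kappa]^{<\omega}$ must already be listed as values of $\phi$ on $[0,\sigma(\alpha))$. This is precisely why $\sigma$ was defined so that $|\sigma(\alpha)|=\mu_\alpha$: by the induction hypothesis each $e_\xi$ for $\xi<\alpha$ exhausts $[\sigma(\xi+1)\times\kappa]^{<\omega}$ within $I_\xi$, and since the ordinals $\sigma(\xi+1)$ are cofinal in $\sigma(\alpha)$, the union of these enumerations covers all of $[\sigma(\alpha)\times\kappa]^{<\omega}$. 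Setting $\phi(0)=\emptyset$ (and $\phi(\beta)=\emptyset$ for $0<\beta<\kappa=\sigma(1)$ as needed) as the base, the recursion closes and produces a canonical $\kappa$-function.
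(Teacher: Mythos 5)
Your overall architecture is close to the paper's (same $\sigma_\kappa$-normal form, same pressing-down $\gamma_\kappa$ as the witness, same observation that the boundary requirement at $\delta=\sigma(\alpha)$ is discharged because the $\sigma(\xi+1)$ are cofinal in $\sigma(\alpha)$), but there are two genuine problems, one small and one central.

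The small one: on the base interval $[0,\kappa)$ you set $\phi\equiv\emptyset$. But $\delta=\kappa=\kappa\cdot 1$ is itself a critical ordinal, and for \emph{any} choice of $\gamma(\kappa)<\kappa$ condition (2) forces $[[\gamma(\kappa),\kappa)\times\kappa]^{<\omega}\subseteq\phi``\kappa$, a set of size $\kappa$; so $\phi\restriction[0,\kappa)$ must already be (essentially) a surjection onto $[\kappa\times\kappa]^{<\omega}$, not constantly empty. This is fixable by a greedy enumeration, but as written the recursion fails at its first critical point, and with it your later claim that the $e_\xi$'s cover $[\sigma(\alpha)\times\kappa]^{<\omega}$.

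The central problem is the sentence ``the second is feasible by a straightforward counting argument, since \dots $|[\gamma_\kappa(\delta),\delta)|=|\sigma(\alpha_{n-1})|<\mu_\alpha$.'' That inequality compares the size of the target for $\delta$ with the size of the \emph{whole} interval $I_\alpha$, but that is not the relevant comparison. By condition (1), a nonempty member of $[[\gamma_\kappa(\delta),\delta)\times\kappa]^{<\omega}$ can only be a value of $\phi$ at a point strictly above $\gamma_\kappa(\delta)$, so it is forced into the interval $(\gamma_\kappa(\delta),\delta)$, whose cardinality is exactly $|\sigma(\alpha_{n-1})|$ --- the same as the cardinality of the target. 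Each such constraint is therefore \emph{tight}, and inside a single $I_\alpha$ there are $\mu_\alpha$-many of them, nested and overlapping at several levels (for $\kappa=\omega$ and $\alpha=\omega_2$, say, one has tight countable constraints at the ordinals $\omega_2+\omega_1+\omega\cdot\beta$ sitting inside tight $\omega_1$-sized constraints at the ordinals $\omega_2+\omega_1\cdot\beta$, all while $e_\alpha$ must also place the $\mu_\alpha$-many ``mixed'' sets needed for surjectivity onto $[\sigma(\alpha+1)\times\kappa]^{<\omega}$). Showing that all of these can be met simultaneously is precisely the content of the theorem; it is what the paper's proof spends its effort on, by defining the surjections $f_\delta:[\delta,\delta')\twoheadrightarrow[[\gamma(\delta),\delta')\times\kappa]^{<\omega}$ one per level of the normal form, interleaving them on each block $[\delta,\delta+\kappa)$ via the combination $H[h_0,\dots,h_{n-1}]$, and then verifying condition (2) by induction on $n$ with a subinduction on $\alpha_{n-1}$. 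Your proposal needs an argument of comparable substance at this point --- either the paper's block-by-block interleaving or an explicit transfinite bookkeeping inside $I_\alpha$ --- and a bare cardinality comparison against $\mu_\alpha$ does not supply it.
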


\begin{proof}
1) For every ordinal $\delta$ with $\Delta = 0$ in its normal
form, let $\delta'= \gamma (\delta) + \sigma (\alpha_{n-1} + 1)$
(for $\delta > 0$ this is also $\delta' = \delta + |\sigma
(\alpha_{n-1})|$). Then fix a function $f_\delta : [\delta ,
\delta') \longrightarrow [[\gamma(\delta) , \delta') \times \kappa
]^{<\omega}$ such that
\begin{enumerate}
\item
$f_\delta$ is onto, and
\item
$\forall \xi \in [\delta, \delta')$  $f_\delta ( \xi ) \subseteq
\xi \times \kappa$.
\end{enumerate}
This is very easy to arrange, because, for every $\alpha$,
$|\sigma (\alpha+1)| = | [\sigma (\alpha), \sigma(\alpha + 1))|
\geq \kappa$. We may start with an arbitrary surjection mapping
$|\sigma(\alpha_{n-1} + 1)) |$-many times to every member of the
range.

2) Now consider ordinals $\delta$ of the form
$\delta=\kappa\cdot\epsilon$. These are the same as just
considered ordinals with $\Delta = 0$ in their normal form.

Suppose that we have finitely many functions $h_0, \dots, h_{n-1}$
such that $(\forall i < n)$ $h_i : [\delta , \delta + \kappa)
\longrightarrow [(\delta +\kappa) \times \kappa ]^{<\omega}$ and
$(\forall \xi \in \dom (h_i))$ $h_i (\xi) \subseteq \xi \times
\kappa$. Then denote by $H=H[h_0,\dots, h_{n-1} ]$, and fix, a
function with the same domain and co-domain such that $(\forall i)
(\forall \xi \in [\delta , \delta + \kappa))$ $(\exists \eta \geq
\xi)$ $H(\eta) = h_i (\xi)$ (and so $H(\eta) = h_i(\xi) \subseteq
\xi \times \kappa \subseteq \eta \times \kappa$). In other words,
$H$ is a combination of $h_0, \dots , h_{n-1}$ mapping onto the
union of their ranges.

3)Finally, define $\phi = \phi_\kappa$ on the ordinal intervals of
the form $[\delta, \delta + \kappa)$ with $\delta = \kappa \cdot
\epsilon$, simultaneously for all such $\delta$, by the following
explicit rule. Find the normal form $\kappa\cdot\epsilon = \delta
= \sigma (\alpha_0 ) + \sigma ( \alpha_1 )+ \dots + \sigma
(\alpha_{n-1}) + 0$. Then set $\phi \restriction [\delta , \delta
+ \kappa) = H[h_0, \dots , h_{n-1}]$, where $h_i = f_{\sigma
(\alpha_0 ) + \sigma ( \alpha_1 )+ \dots + \sigma (\alpha_{i})}
\restriction [\delta, \delta +\kappa)$.

4)We are left to check that the function $\phi$ just defined
satisfies the Definition 1.1. It is transparent that the first
condition is satisfied, and the second is in the following
assertion.
\begin{claim}
Suppose that, for every $\delta$ with $\Delta = 0$ (and $n\geq 0$)
in its normal form, $$ \ran (f_\delta ) \subseteq \ran (\phi
\restriction [\delta , \delta' )).$$ Then, for every such $\delta$
with $n \geq 1$, $$[[\gamma (\delta ), \delta )\times
\kappa]^{<\omega} \subseteq \ran (\phi \restriction [\gamma
(\delta ) , \delta )).$$
\end{claim}

\begin{proof}
This is straightforward by induction on $n$, and then by a
subinduction on $\alpha_{n-1}$ in the normal form for $\delta$.

The case $\alpha_{n-1} = \beta + 1$ is explicit, and for
$\alpha_{n-1}$ a limit ordinal use $[[\gamma (\delta), \gamma
(\delta) + \sigma (\alpha_{n-1}))\times \kappa ]^{<\omega} =
\bigcup_{\beta < \alpha_{n-1}} [[\gamma (\delta), \gamma (\delta)
+ \sigma (\beta))\times \kappa ]^{<\omega}$. The equation is true,
because $\{\beta < \alpha_{n-1} : \gamma (\gamma(\delta) + \sigma
(\beta) ) = \gamma (\delta)\}$ is cofinal in $\alpha_{n-1}$.
\end{proof}
\end{proof}

\begin{theorem}
If $(\kappa^+)^\kappa  = \kappa^+$ and, for every cardinal $\mu$
with $\kappa^+ \leq\mu < \lambda$, we have $\mu^\kappa=\mu$, then
there is a $\kappa$-strong $(\kappa , \lambda) $-function. Under
CH, there is an $\omega$-strong $(\omega ,
\aleph_\omega)$-function.
\end{theorem}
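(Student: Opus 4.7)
The plan is to adapt the construction of $\phi_\kappa$ from Theorem 1.4 by replacing the scale $\sigma_\kappa$ with $\sigma^+ := \sigma_{\kappa^+}$ (together with the associated pressing-down map $\gamma^+ := \gamma_{\kappa^+}$ and $\sigma^+$-normal form from Definition and Lemma 1.3 and Definition 1.4) and by replacing finite subsets with $\kappa$-sized subsets throughout. A preliminary observation is that any $\delta \leq \lambda$ with $\cf(\delta) = \kappa^+$ satisfies $\Delta = 0$ in its $\sigma^+$-normal form and $\cf(\sigma^+(\alpha_{n-1})) = \kappa^+$, since any nonzero $\Delta < \kappa^+$ would pull $\cf(\delta)$ below $\kappa^+$.

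For each $\delta < \lambda$ with $\Delta = 0$ and $n \geq 1$ in its $\sigma^+$-normal form, set $\delta' := \gamma^+(\delta) + \sigma^+(\alpha_{n-1}+1)$ and fix a surjection
\[
f_\delta : [\delta, \delta') \longrightarrow [[\gamma^+(\delta), \delta') \times \kappa]^\kappa
\]
with $f_\delta(\xi) \subseteq \xi \times \kappa$, arranged so that every value is attained $|\sigma^+(\alpha_{n-1})|$-many times. This is possible because $|[\delta, \delta')| = |\sigma^+(\alpha_{n-1})| = |\delta'| < \lambda$, and the hypotheses $(\kappa^+)^\kappa = \kappa^+$ and $\mu^\kappa = \mu$ for $\kappa^+ \leq \mu < \lambda$ together give $|[\delta' \times \kappa]^\kappa| = |\delta'|^\kappa = |\delta'|$. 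Then, exactly as in step (3) of the proof of Theorem 1.4, for each $\delta = \kappa^+ \cdot \epsilon < \lambda$ I define
\[
\psi \restriction [\delta, \delta + \kappa^+) := H[h_0, \dots, h_{n-1}],
\]
where $h_i := f_{\sigma^+(\alpha_0) + \cdots + \sigma^+(\alpha_i)} \restriction [\delta, \delta + \kappa^+)$ and $H$ is the interleaving combination of step (2) of Theorem 1.4. These restrictions make sense because the cardinals $|\sigma^+(\alpha_j)|$ in the normal form are strictly decreasing, so $\delta + \kappa^+$ lies below $\sigma^+(\alpha_0) + \cdots + \sigma^+(\alpha_{i-1}) + \sigma^+(\alpha_i+1)$ for every $i$.

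Condition (1) of Definition 1.2 is transparent. Condition (2) is encoded in the analog of the Claim in Theorem 1.4: for every $\delta \leq \lambda$ with $\Delta = 0$, $n \geq 1$, and $\cf(\delta) = \kappa^+$, one has $[[\gamma^+(\delta), \delta) \times \kappa]^\kappa \subseteq \psi``\delta$. The proof is by induction on $n$ with a subinduction on $\alpha_{n-1}$, following the argument of Theorem 1.4 verbatim. The successor case $\alpha_{n-1} = \beta + 1$ is explicit; the limit case uses
\[
[[\gamma^+(\delta), \gamma^+(\delta) + \sigma^+(\alpha_{n-1})) \times \kappa]^\kappa = \bigcup_{\beta < \alpha_{n-1}} [[\gamma^+(\delta), \gamma^+(\delta) + \sigma^+(\beta)) \times \kappa]^\kappa,
\]
the cofinal choice of $\beta$ satisfying $\gamma^+(\gamma^+(\delta) + \sigma^+(\beta)) = \gamma^+(\delta)$ being made exactly as in Theorem 1.4. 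For the second assertion, under CH one has $\aleph_1^\omega = \aleph_1$, and by Hausdorff's formula $\aleph_n^\omega = \aleph_n$ for every $1 \leq n < \omega$, so the first part applies with $(\omega, \aleph_\omega)$.

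The main obstacle I expect is the limit step of the subinduction. The displayed union identity is valid only when $\cf(\alpha_{n-1}) > \kappa$, i.e.\ $\cf(\alpha_{n-1}) \geq \kappa^+$; Theorem 1.4 sidestepped this by using finite subsets, for which no cofinality demand arises. Here, restricting the Claim to $\delta$ of cofinality $\kappa^+$ is essential, since then $\cf(\sigma^+(\alpha_{n-1})) = \kappa^+$ even when $\alpha_{n-1}$ is a limit. Maintaining this cofinality bookkeeping through the induction—and simultaneously verifying the Claim's hypothesis that $\ran(f_\delta) \subseteq \ran(\psi \restriction [\delta, \delta'))$ holds for each relevant $\delta$—is the point requiring careful attention.
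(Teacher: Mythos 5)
Your proposal follows exactly the paper's route: the published proof is a one-paragraph \emph{mutatis mutandis} reduction to the construction of $\phi_\kappa$, replacing $\sigma_\kappa$ and $\gamma_\kappa$ by $\sigma_{\kappa^+}$ and $\gamma_{\kappa^+}$ and invoking the cardinal arithmetic hypotheses to obtain the surjections $f_\delta$ onto $[[\gamma(\delta),\delta')\times\kappa]^{\kappa}$ (via $|[\delta,\delta')|^\kappa=|[\delta,\delta')|\geq\kappa^+$), which is precisely your construction. Your closing observation --- that the limit step of the subinduction only goes through because condition (2) of Definition 1.2 is restricted to $\delta$ with $\cf(\delta)=\kappa^+$, so that $\cf(\sigma_{\kappa^+}(\alpha_{n-1}))>\kappa$ and the union identity for $[\,\cdot\,]^{\kappa}$ holds --- is exactly the subtlety the paper's ``mutatis mutandis'' leaves implicit, and you have it right.
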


\begin{proof}

This time consider $\sigma_{\kappa^+}$-normal forms for ordinals
$\delta \in \ON$ and the regressive function $\gamma_{\kappa^+}$.
Otherwise do - {\it mutatis mutandis} - as in the proof of Theorem
1.5. It will be still possible to define $f_\delta : [\delta ,
\delta') \twoheadrightarrow [[\gamma(\delta) , \delta') \times
\kappa ]^{\kappa}$, because, for every $\delta$ of the form
$\delta = \kappa^+ \cdot \epsilon$, we have \linebreak $|[\delta,
\delta')|^\kappa = |[\delta, \delta')|\geq \kappa^+$.
\end{proof}

\section{Shapirovskii $\pi$-bases in regular spaces}

Recall ([1]) that $\R \subseteq \T_X$ is a $\pi$-base for the
topology $\T_X$ of $X$ iff $\forall U \in \T_X$ $\exists R \in \R$
with $R\subseteq U$. A family $\R \subseteq \T_X$ is a local
$\pi$-base for $p \in X$ iff $\forall U \in \T_X$ with $p\in U$
$\exists R \in \R$ with $R\subseteq U$.  The $\pi$-character of a
point $p$ in $X$ is the cardinal $\pi\chi (p,X)= \min \{ |\R| : \R
\subseteq \T_X$ is a local $\pi$-base for $p \}$ and the
$\pi$-character of $X$ is $ \pi\chi (X) = \sup \{ \pi\chi (p,X) :
p \in X \}$.

For $P= \{p_\alpha : \alpha < \mu \}$ and $\delta < \mu$, let's
write $P_\delta = \{ p_\alpha : \alpha < \delta \}$ and $P^\delta
= \{ p_\alpha : \delta \leq \alpha < \mu \}$.

$P= \{p_\alpha : \alpha < \mu \} \subseteq X$ is called {\bf
left-separated}, if $\overline{P_\delta} \cap P^\delta =
\emptyset$, for every $\delta < \mu$ (this means that all initial
segments of $P$ are relatively closed in $P$). It is well-known
and easy to see that every space $X$ has a dense subspace
left-separated in the order-type $d(X)$.

\begin{definition}
Suppose $X$ is a regular topological space with $\pi\chi (X)=
\kappa$. Suppose the density of $X$, $d(X) = \lambda$, is an
infinite cardinal and $P= \{p_\alpha : \alpha < \lambda \}$, is a
left-separated dense subspace, left-separated as written. Then $\S
= \{S_{\alpha,i} : <\alpha,i> \in \lambda \times \kappa \}
\subseteq \T_X$, or $\S$ together with $P$, is called a {\bf
Shapirovskii $\pi$-base for $X$} if the following conditions are
satisfied:

\begin{itemize}
\item[(a)]
$\{S_{\alpha , i} : i < \kappa\}$ a local $\pi$-base for
$p_\alpha$ in $X$;
\item[(b)]
$\overline{P_\alpha} \cap \cup \{ \overline{S_{\beta, i}}: \alpha
\leq \beta , i < \kappa \} = \emptyset$.
\item[(c)]
$(\forall \delta = \kappa\cdot\epsilon) (\forall A \in [[\gamma
(\delta), \delta) \times \kappa]^{<\omega})$ if $\bigcap_{a \in A}
\overline{S_a} \neq \emptyset$, then $\bigcap_{a \in A}
\overline{S_a} \cap \bigcup \{ \overline{P_\alpha} : \alpha <
\delta\} \neq \emptyset$, and therefore $\bigcap_{a \in A}
\overline{S_a} \cap \overline{P_\delta} \neq \emptyset$.
\end{itemize}
\end{definition}

We will also say that $\S$ as above is a {\bf $\kappa$-strong
Shapirovskii $\pi$-base}, if the condition (c) is replaced by the
following:

(c*) $(\forall \delta = \kappa^+ \cdot\epsilon) (\forall A \in
[[\gamma (\delta), \delta) \times \kappa]^{\kappa})$ if
$\bigcap_{a \in A} \overline{S_a} \neq \emptyset$, then
$\bigcap_{a \in A} \overline{S_a} \cap \overline{P_\delta} \neq
\emptyset$.

\begin{theorem}
Every regular space has a Shapirovskii $\pi$-base.
\end{theorem}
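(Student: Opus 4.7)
The plan is to construct $P$ and $\S$ simultaneously by transfinite recursion on $\xi < \lambda = d(X)$, using the canonical $\kappa$-function $\phi = \phi_\kappa$ supplied by Theorem~1.5 to schedule the ``intersection obligations'' required by condition~(c).

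At stage $\xi$, I consult $A_\xi := \phi(\xi)\subseteq\xi\times\kappa$ and form $K_\xi := \bigcap_{a\in A_\xi}\overline{S_a}$ (with $K_\xi = X$ if $A_\xi=\emptyset$). If $K_\xi\neq\emptyset$ and $K_\xi\not\subseteq\overline{P_\xi}$, I pick $p_\xi\in K_\xi\setminus\overline{P_\xi}$; otherwise I pick any $p_\xi\in X\setminus\overline{P_\xi}$, which exists because $|P_\xi|=\xi<d(X)$ so $P_\xi$ is not dense. Using regularity together with $p_\xi\notin\overline{P_\xi}$, I fix an open $V_\xi\ni p_\xi$ with $\overline{V_\xi}\cap\overline{P_\xi}=\emptyset$ and then choose a local $\pi$-base $\{S_{\xi,i}:i<\kappa\}$ at $p_\xi$ with each $S_{\xi,i}\subseteq V_\xi$; this uses $\pi\chi(p_\xi,X)\leq\kappa$.

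Condition (a) holds by construction, and (b) follows because $\overline{S_{\beta,i}}\subseteq\overline{V_\beta}$ is disjoint from $\overline{P_\beta}\supseteq\overline{P_\alpha}$ whenever $\alpha\leq\beta$; left-separation of $P$ is automatic since $p_\eta\notin\overline{P_\eta}\supseteq\overline{P_\delta}$ for $\eta\geq\delta$. The heart of the proof is (c): given $\delta=\kappa\cdot\epsilon$ and $A\in[\,[\gamma(\delta),\delta)\times\kappa\,]^{<\omega}$ with $K:=\bigcap_{a\in A}\overline{S_a}\neq\emptyset$, Definition~1.1(2) furnishes some $\xi<\delta$ with $\phi(\xi)=A$. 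At stage $\xi$ either I placed $p_\xi\in K\setminus\overline{P_\xi}$, yielding $p_\xi\in K\cap\overline{P_\delta}$ since $p_\xi\in P_\delta$; or the ``otherwise'' branch was entered precisely because $\emptyset\neq K\subseteq\overline{P_\xi}\subseteq\overline{P_\delta}$. Either way $K\cap\overline{P_\delta}\neq\emptyset$, as required.

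I expect the most delicate point to be exactly this case split: the scheduling via $\phi$ succeeds only because failing to place $p_\xi\in K$ at stage $\xi$ happens precisely when $K\subseteq\overline{P_\xi}$, so the obligation is already satisfied on the other side. The regressive clause $\gamma(\delta)<\delta$ in Definition~1.1(2) is what ensures that every such finite $A$ actually appears as $\phi(\xi)$ with $\xi$ strictly below the critical level $\delta$, so no obligation is postponed to or past $\delta$.
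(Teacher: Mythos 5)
Your recursion, your use of $\phi_\kappa$ to schedule the finite intersection obligations, and your verification of (a), (b), (c) and of left-separation all track the paper's own proof closely. The only cosmetic difference in the case split is that the paper triggers the first branch when $K_\xi\neq\emptyset$ and $K_\xi\cap\overline{P_\xi}=\emptyset$, while you trigger it when $K_\xi\neq\emptyset$ and $K_\xi\not\subseteq\overline{P_\xi}$; either version supports the ``either I put a point of $P_\delta$ into $K$, or $K$ was already contained in $\overline{P_\xi}\subseteq\overline{P_\delta}$'' dichotomy that gives (c).

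There is, however, one genuine gap: Definition 2.1 requires $P$ to be a left-separated \emph{dense} subspace, and density is precisely what makes $\S$ a $\pi$-base of $X$ at all --- conditions (a)--(c) alone only say that $\S$ is a union of local $\pi$-bases at the points of $P$. Your ``otherwise'' branch picks an \emph{arbitrary} point of $X\setminus\overline{P_\xi}$, and nothing in your construction forces $\overline{P}=X$. A left-separated set of order type $d(X)$ need not be dense (in $\mathbb{R}$, take $P=\{1,2,3,\dots\}$), and indeed in $X=\mathbb{R}$ your recursion could choose every $p_\xi$ inside $[-1,2]$ and every $V_\xi$ inside $(-2,3)$, so that no member of $\S$ is contained in $(5,6)$. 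The paper guards against exactly this by fixing in advance a left-separated dense subspace $Q=\{q_\alpha:\alpha<\lambda\}$ and, in the ``otherwise'' branch, setting $p_\delta=q_\xi$ for the \emph{least} $\xi$ with $q_\xi\notin\overline{P_\delta}$; these least indices strictly increase along the ``otherwise'' stages, which is the mechanism that absorbs $Q$ into $\overline{P}$ and yields density. You need to restore this device (or supply some other argument that $\overline{P}=X$) and then actually verify that the $P$ you build is dense.
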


\begin{proof}
Let $Q= \{ q_\alpha : \alpha < \lambda\}$ be a left-separated
dense subspace of $X$. Define $P=\{p_\alpha : \alpha < \lambda \}$
and $\S = \{S_{\alpha,i} : <\alpha,i> \in \lambda \times \kappa
\}$ by induction on $\delta < \lambda$.

Suppose at the stage $\delta \geq 0$ we have  $P_\delta =
\{p_\alpha : \alpha < \delta \}$ and $\S = \{S_{\alpha,i} :
<\alpha,i> \in \delta \times \kappa \}$.

1) If $\bigcap \{ \overline{S_a} : a \in \phi_\kappa (\delta) \}
\cap \overline{P_\delta} = \emptyset$ and $\bigcap \{\overline{
S_a} : a \in \phi_\kappa (\delta) \} \neq \emptyset$, pick
$p_\delta$ in $\bigcap \{\overline{ S_a} : a \in \phi_\kappa
(\delta) \}$.

Otherwise, put $p_\delta = q_\xi$, where $\xi$ is the least index
of a member of $Q \setminus \overline{P_\delta}$.

2) Next, $p_\delta$ being thus defined, pick a $\pi$-base $\B$ for
$p_\delta$ of size $|\B|\leq \kappa$ and with $\overline{P_\delta}
\cap \overline{B} =\emptyset$ for each member $B$ of $\B$, - and
index it as $\{S_{\delta , i} : i \in \kappa \}$. End of
induction.
\end{proof}

\begin{theorem}
Under the cardinal assumptions of Theorem 1.7, every regular space
with $\pi\chi (X) \leq \kappa$ and $d(X) \leq \lambda$ has a
$\kappa$-strong Shapirovskii $\pi$-base.
\end{theorem}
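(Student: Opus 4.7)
The plan is to repeat the construction of Theorem 2.2 verbatim, substituting for the class function $\phi_\kappa$ a $\kappa$-strong canonical $(\kappa, \lambda')$-function $\psi$, where $\lambda' := d(X) \leq \lambda$. First I would observe that the cardinal hypotheses of Theorem 1.7 are inherited when $\lambda$ is replaced by $\lambda'$: $(\kappa^+)^\kappa = \kappa^+$ is unchanged, and $\mu^\kappa = \mu$ for $\kappa^+ \leq \mu < \lambda'$ follows from the same for $\mu < \lambda$. Hence Theorem 1.7 produces the required $\psi : \lambda' \to [\lambda' \times \kappa]^{\kappa}$.

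Next I would fix a left-separated dense subspace $Q = \{q_\alpha : \alpha < \lambda'\}$ of $X$ and define $P = \{p_\alpha : \alpha < \lambda'\}$ and $\S = \{S_{\alpha,i} : (\alpha,i) \in \lambda' \times \kappa\}$ by recursion on $\delta < \lambda'$, in exactly the fashion of Theorem 2.2 but with $\psi(\delta)$ in place of $\phi_\kappa(\delta)$: if $\bigcap\{\overline{S_a} : a \in \psi(\delta)\}$ is non-empty and disjoint from $\overline{P_\delta}$, place $p_\delta$ in this intersection; otherwise set $p_\delta = q_\xi$ for the least $\xi$ with $q_\xi \notin \overline{P_\delta}$. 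In either event, regularity of $X$ together with $\pi\chi(p_\delta, X) \leq \kappa$ allows me to pick a local $\pi$-base $\{S_{\delta,i} : i < \kappa\}$ at $p_\delta$ whose members have closures disjoint from $\overline{P_\delta}$. Conditions (a) and (b) of Definition 2.1 then fall out of the construction exactly as in Theorem 2.2, so the only genuine verification is (c*).

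To prove (c*), I would fix $\delta \leq \lambda'$ with $\delta = \kappa^+ \cdot \epsilon$ and $\cf(\delta) = \kappa^+$, together with $A \in [[\gamma(\delta),\delta) \times \kappa]^{\kappa}$ such that $\bigcap_{a \in A}\overline{S_a} \neq \emptyset$. Property (2) of $\psi$ supplies $\xi < \delta$ with $\psi(\xi) = A$; since $\psi(\xi) \subseteq \xi \times \kappa$ while $A$ contains an element with first coordinate in $[\gamma(\delta),\delta)$, automatically $\gamma(\delta) < \xi$. At stage $\xi$ of the induction either $p_\xi$ was placed inside $\bigcap_{a \in A}\overline{S_a}$ directly, or the default branch was taken because $\bigcap_{a \in A}\overline{S_a} \cap \overline{P_\xi}$ was already non-empty; in either subcase $\bigcap_{a \in A}\overline{S_a} \cap \overline{P_\delta} \neq \emptyset$, since $P_\xi \subseteq P_\delta$. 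The main obstacle is packaged into Theorem 1.7 itself: producing a single length-$\lambda'$ function $\psi$ that presents every $\kappa$-sized subset of $[\gamma(\delta),\delta) \times \kappa$ before stage $\delta$ at each level of cofinality $\kappa^+$. Once $\psi$ is in hand, the topological induction is essentially that of Theorem 2.2 and requires no new ideas.
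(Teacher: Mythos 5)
Your proposal is correct and is exactly the argument the paper intends: Theorem 2.3 is stated without proof precisely because it is the proof of Theorem 2.2 run verbatim with the $\kappa$-strong $(\kappa,d(X))$-function of Theorem 1.7 in place of $\phi_\kappa$, and your verification of (c*) via surjectivity of $\psi$ below ordinals of cofinality $\kappa^+$ is the intended one. The only caveat is your silent restriction of (c*) to $\delta$ with $\cf(\delta)=\kappa^+$ rather than all $\delta=\kappa^+\cdot\epsilon$ as literally written; this is what Definition 1.2(2) actually delivers (sets cofinal in a $\delta$ of small cofinality cannot be enumerated before $\delta$) and is all the paper ever uses (cf.\ step 2 of the proof of Theorem 2.6), so it is the right reading.
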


Recall ([1]) that $P= \{p_\alpha : \alpha < \mu \} \subseteq X$ is
a {\bf free sequence} in the space $X$, if $\overline{P_\delta}
\cap \overline{P^\delta} = \emptyset$, for every $\delta < \mu$.
Let $\F (X) =$ sup $\{ | P| : P$ is a free sequence in $X \}$.

The following characterization of free sequences parallels
Shapirovskii's characterization of discrete sets in [4].\footnote
{For an important different (external, or ``algebraic") approach
to free sequences see [9] and [10].} It says that small $\F(X)$
can be viewed as a compactness-like reflection property of the
space $X$, - and this is precisely what we will need in the
sequel.

\begin{lemma}
Let $X$ be any topological space and $\kappa$ any infinite
cardinal. Then (a) $\F(X) \leq \kappa$ if and only if (b) for
every $Y\subseteq X$, every family $\U \subseteq T_ X$ such that
$(\forall A \in [Y]^{\leq\kappa})$ $(\exists U \in \U)$
$\overline{A} \subseteq U$ has a subfamily $\V \subseteq \U$ of
size $|\V| \leq \kappa$ covering $Y$.
\end{lemma}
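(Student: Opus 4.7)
My plan is to establish each direction by a transfinite construction of length $\kappa^+$; the two are essentially inverses of each other.

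For $(a) \Rightarrow (b)$ I argue by contradiction. Suppose $\F(X) \leq \kappa$ yet some $Y \subseteq X$ and $\U \subseteq \T_X$ satisfy the hypothesis of (b) while no subfamily of $\U$ of size $\leq \kappa$ covers $Y$. Recursively, for each $\alpha < \kappa^+$, I choose $U_\alpha \in \U$ with $\overline{P_\alpha} \subseteq U_\alpha$ (possible because $|P_\alpha| \leq \kappa$ and $\U$ satisfies the hypothesis) and then $p_\alpha \in Y \setminus \bigcup_{\eta \leq \alpha} U_\eta$ (possible because $|\{U_\eta : \eta \leq \alpha\}| \leq \kappa$ and no such subfamily covers $Y$). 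For every $\delta < \kappa^+$ this gives $\overline{P_\delta} \subseteq U_\delta$ while $P^\delta \subseteq X \setminus U_\delta$; since $X \setminus U_\delta$ is closed, $\overline{P^\delta} \cap \overline{P_\delta} = \emptyset$. Hence $P := \{p_\alpha : \alpha < \kappa^+\}$ is a free sequence of length $\kappa^+ > \kappa$, contradicting (a).

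For $(b) \Rightarrow (a)$ I argue the contrapositive. If $\F(X) > \kappa$, then truncating (initial segments of free sequences are visibly free) produces a free sequence $P = \{p_\alpha : \alpha < \kappa^+\}$. Set $Y := P$ and $U_\delta := X \setminus \overline{P^\delta}$ for $\delta < \kappa^+$; each $U_\delta$ is open and, by freeness, contains $\overline{P_\delta}$. For any $A \in [Y]^{\leq \kappa}$ regularity of $\kappa^+$ supplies $\delta$ with $A \subseteq P_\delta$, whence $\overline{A} \subseteq U_\delta$; so $\U := \{U_\delta : \delta < \kappa^+\}$ satisfies the hypothesis of (b). But no $\kappa$-sized subfamily of $\U$ covers $Y$: given $\{U_{\delta_\xi} : \xi < \kappa\}$, put $\delta^* := \sup_\xi \delta_\xi < \kappa^+$; then $\delta_\xi \leq \delta^*$ forces $p_{\delta^*} \in P^{\delta_\xi} \subseteq \overline{P^{\delta_\xi}}$, so $p_{\delta^*}$ is missed by every $U_{\delta_\xi}$, contradicting (b).

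I expect no real obstacle. The one point that deserves highlighting is the upgrade, in the $(a) \Rightarrow (b)$ direction, from the evident $P^\delta \cap U_\delta = \emptyset$ to the topological statement $\overline{P^\delta} \cap \overline{P_\delta} = \emptyset$; this relies only on $U_\delta$ being open, so that $X \setminus U_\delta$ is closed and absorbs $\overline{P^\delta}$. The rest is bookkeeping at stages $\alpha < \kappa^+$, where both $|P_\alpha|$ and $|\{U_\eta : \eta \leq \alpha\}|$ remain $\leq \kappa$ and the construction never stalls.
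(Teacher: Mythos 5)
Your proof is correct and follows essentially the same route as the paper: the forward direction builds a free sequence of length $\kappa^+$ by recursively choosing $U_\delta \supseteq \overline{P_\delta}$ and $p_\delta$ outside $\bigcup_{\eta\le\delta}U_\eta$, and the converse uses the cover $\{X\setminus\overline{P^\delta} : \delta<\kappa^+\}$ together with the regularity of $\kappa^+$. The only (immaterial) difference is that in the necessity direction you exhibit the specific uncovered point $p_{\delta^*}$, whereas the paper notes that $\bigcup\V$ lies inside a single $X\setminus\overline{P^\delta}$.
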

\begin{proof}
{\it Sufficiency.} Suppose $Y$ and $\U$ are as in (b), but (b)
fails, so there is no $\V \in [\U]^{\leq\kappa}$ covering $Y$. We
will pick up a free sequence $P= \{p_\alpha : \alpha < \kappa^+
\}$ by induction on $\delta < \kappa^+$. Suppose that at the stage
$\delta \geq 0$ we have $P_\delta = \{ p_\alpha : \alpha <
\delta\}$ and $\{U_\alpha : \alpha < \delta \} \subseteq \U$.

Then pick $U_\delta \in \U$ with $\overline{P_\delta} \subseteq
U_\delta$ and $p_\delta \in Y \setminus \bigcup_{\alpha \leq
\delta} U_\alpha$.

We claim that $P$ is a free sequence. Indeed, for $\delta <
\kappa^+$, $\overline{P_\delta} \subseteq U_\delta$ and $P^\delta
\bigcap U_\delta = \emptyset$, whereupon $\overline{P_\delta}
\bigcap \overline{P^\delta} = \emptyset$.

{\it  Necessity.} Now assume that (a) fails, and there is a free
sequence $P= \{p_\alpha : \alpha < \kappa^+ \}$. Let $\U = \{
X\setminus \overline{P^\delta} : \delta < \kappa^+ \}$. Because
$\kappa^+$ is a regular cardinal, $(\forall A \in [P]^{\leq
\kappa})$ $(\exists \delta < \kappa^+)$ $\overline{A} \subseteq
\overline{P_\delta} \subseteq X\setminus \overline{P^\delta}$.
Therefore, $\U$ is as in (b) with $Y=P$. Let $\V$ be a subfamily
of $\U$ of size $|\V| \leq \kappa$. Then, again by regularity of
$\kappa^+$, $(\exists \delta < \kappa^+)$ such that $\bigcup \V
\subseteq \bigcup_{\gamma \leq \delta} (X\setminus
\overline{P^\gamma}) = X\setminus \overline{P^\delta}$. Therefore
$\bigcup \V$ is disjoint from $P^\delta$, and thus does not cover
$P$.
\end{proof}

Recall (see [0]) that a space $X$ is {\bf initially
$\kappa$-compact}, if every cover of cardinality at most $\kappa$
has a finite subcover, and $X$ is {\bf
$(\kappa,\kappa^+]$-compact}, if every cover of $X$ of cardinality
$\kappa^+$ has a subcover of cardinality $\kappa$.

The fact that $t(X) = \F(X)$ in compact spaces is well known, but
we will need these weaker covering properties of $X$ as a factor
in the relationship between $t(X)$ and $\F(X)$. The following fact
is folklore.

\begin{fact}
Suppose that $X$ is a regular space. Then
\begin{itemize}
\item[(a) ]
$X$ is initially $\kappa$-compact +  $\F(X) \leq \kappa$
$\Longrightarrow$ $t(X) \leq \kappa$.
\item[(b )]
$X$ is $(\kappa, \kappa^+]$-compact + $t(X) \leq \kappa$
$\Longrightarrow$ $\F(X) \leq \kappa$.
\end{itemize}
\end{fact}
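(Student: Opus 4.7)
The plan is to prove both implications by contradiction.

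For (b), assume $t(X) \leq \kappa$ and $X$ is $(\kappa,\kappa^+]$-compact, and suppose toward contradiction that $P = \{p_\alpha : \alpha < \kappa^+\}$ is a free sequence. The decreasing family of closed sets $\{\overline{P^\alpha}\}_{\alpha < \kappa^+}$ must have nonempty intersection: otherwise its complementary open family is a cover of $X$ of cardinality $\kappa^+$, and $(\kappa,\kappa^+]$-compactness yields a subcover of cardinality at most $\kappa$, which by the regularity of $\kappa^+$ would be bounded and thus force $\overline{P^{\alpha_0}}$ to be empty for some $\alpha_0 < \kappa^+$---impossible. Pick $x \in \bigcap_\alpha \overline{P^\alpha}$; since $x \in \overline{P}$, tightness produces $B \in [P]^{\leq\kappa}$ with $x \in \overline{B}$, and $B \subseteq P_\delta$ for some $\delta < \kappa^+$ by the regularity of $\kappa^+$. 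Hence $x \in \overline{P_\delta} \cap \overline{P^\delta}$, contradicting the freeness of $P$.

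For (a), assume $X$ is regular, initially $\kappa$-compact, with $\F(X) \leq \kappa$, and suppose $(A,x)$ witnesses $t(X) > \kappa$, so $x \in \overline{A}$ while $x \notin \overline{B}$ for every $B \in [A]^{\leq\kappa}$. The plan is to construct a free sequence $\{p_\alpha : \alpha < \kappa^+\} \subseteq A$ by transfinite recursion: at stage $\alpha$, since $|P_\alpha| \leq \kappa$ forces $x \notin \overline{P_\alpha}$, regularity provides an open $U_\alpha \supseteq \overline{P_\alpha}$ with $x \notin \overline{U_\alpha}$; one then picks $p_\alpha \in A \setminus \bigcup_{\beta \leq \alpha} U_\beta$. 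The resulting $P$ satisfies $\overline{P_\alpha} \subseteq U_\alpha$ and $P^\alpha \cap U_\alpha = \emptyset$, which together make it a free sequence of length $\kappa^+$---contradicting $\F(X) \leq \kappa$.

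The key step, and the main obstacle, is verifying that the recursion never stalls: that $A \not\subseteq \bigcup_{\beta < \alpha} U_\beta$ at every $\alpha < \kappa^+$. I plan to argue this by contradiction. If $A$ were covered by these $\leq\kappa$ open sets $U_\beta$, each satisfying $x \notin \overline{U_\beta}$, then applying Lemma 2.4 (via $\F(X) \leq \kappa$) should allow us to upgrade this cover to one of $\overline{A}$, after which the initial $\kappa$-compactness of the closed subspace $\overline{A}$ yields a finite subcover $\overline{A} \subseteq U_{\beta_1} \cup \cdots \cup U_{\beta_n}$. But then $x \in U_{\beta_i}$ for some $i$, contradicting $x \notin \overline{U_{\beta_i}} \supseteq U_{\beta_i}$.
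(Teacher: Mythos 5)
Your part (b) is correct and is essentially the paper's own argument run in the opposite order: the paper uses tightness first to see that $\{X\setminus\overline{P^\delta}:\delta<\kappa^+\}$ is an increasing $\kappa^+$-cover and then contradicts $(\kappa,\kappa^+]$-compactness, whereas you use the compactness first to produce a point of $\bigcap_\delta\overline{P^\delta}$ and then contradict freeness via tightness. Either way the content is the same and your version is sound.

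Part (a), however, has a genuine gap, and it sits exactly at the step you flag as ``the key step.'' Your plan is to build a free sequence directly from a witness $(A,x)$ of $t(X)>\kappa$, and the whole construction hinges on showing that $A\not\subseteq\bigcup_{\beta\le\alpha}U_\beta$ at each stage. Your proposed argument for this --- ``applying Lemma 2.4 should allow us to upgrade this cover to one of $\overline{A}$'' --- is a misapplication of Lemma 2.4. That lemma has a specific hypothesis: the family $\U$ must have the property that for \emph{every} $C\in[Y]^{\le\kappa}$ there is a \emph{single} member $U\in\U$ with $\overline{C}\subseteq U$. Your family $\{U_\beta:\beta\le\alpha\}$ consists of sets chosen only to contain $\overline{P_\beta}$ and avoid $x$; there is no reason whatsoever that an arbitrary $\le\kappa$-sized subset of $A$ (or of $\overline{A}$) has its closure inside one of them. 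Moreover, even where Lemma 2.4 does apply, its conclusion is the extraction of a $\le\kappa$-sized subcover of $Y$ itself; it never converts a cover of $A$ into a cover of $\overline{A}$, which is the conversion you actually need before initial $\kappa$-compactness of the closed set $\overline{A}$ can be invoked. A $\le\kappa$-sized open cover of $A$ need not cover $\overline{A}$, and this is precisely the obstruction.

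The paper avoids this trap by proving $t(X)\le\kappa$ through the standard reformulation of tightness: it suffices to show that every $Y$ with $Y=\bigcup\{\overline{B}:B\in[Y]^{\le\kappa}\}$ is closed. For $p\notin Y$, regularity gives, for \emph{each} $B\in[Y]^{\le\kappa}$, a neighbourhood $U_B$ of $p$ with $\overline{U_B}\cap\overline{B}=\emptyset$; then the family $\{X\setminus\overline{U_B}:B\in[Y]^{\le\kappa}\}$ genuinely satisfies the hypothesis of Lemma 2.4 with respect to $Y$ (because it is indexed by all small subsets of $Y$), so one gets a $\le\kappa$-sized subcover of $Y$, and --- crucially --- such a $Y$ is itself initially $\kappa$-compact, so a finite subcover exists and a finite intersection of the $U_B$'s separates $p$ from $Y$. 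Working with the $\kappa$-closed set $Y$ rather than with $A$ or $\overline{A}$ is what makes both the application of Lemma 2.4 and the appeal to initial $\kappa$-compactness legitimate; I would recommend you restructure your proof of (a) along these lines rather than trying to patch the direct recursion.
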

\begin{proof}
(a) 1) Let $Y \subseteq X$ with $Y = \bigcup \{\overline{A} : A
\in [Y]^{\leq\kappa} \}$ and observe that $Y$ is initially
$\kappa$-compact.

2) It is sufficient to show that $Y$ is closed. So fix $p\notin
Y$. We will find a neighbourhood of $p$ disjoint from $Y$.

3) For every $A\in [Y]^{\leq\kappa}$ find $U_A$, a neighbourhood
of $p$ with $\overline{U_A} \cap \overline{A} = \emptyset$.

4) $\U = \{X\setminus \overline{U_A} : A \in [Y]^{\leq\kappa} \}$
is a cover of $Y$ as in (b) of Lemma 2.4.

5) Therefore, there is a $\V \subseteq \U$, $|\V| \leq \kappa$,
$\V$ covers $Y$, and since $Y$ is initially $\kappa$-compact, a
finite $\W \subseteq \V \subseteq \U$ which also covers $Y$.

6) But then $\bigcap \{U_A : X\setminus \overline{U_A} \in \W \}$
is a neighbourhood of $p$ disjoint from $Y$, as wanted.

(b) Suppose $P= \{p_\alpha : \alpha < \kappa^+ \} \subseteq X$. By
tightness, $\overline{P} = \bigcup \{ \overline{P_\alpha} : \alpha
< \kappa^+ \}$. If $P$ were a free sequence in $X$, then $\U = \{
X\setminus \overline{P^\delta} : \delta < \kappa^+ \}$ would be an
increasing $\kappa^+$-cover of $X$, contradicting $(\kappa,
\kappa^+]$-compactness of $X$.
\end{proof}

For a family $\R$ of subsets of $X$ and a point $p\in X$, the {\bf
order of $p$ in $\R$} is the cardinal $\ord(p,\R) = | \{ R \in \R
: p\in R \} |$. The {\bf order of $\R$} is $\ord (\R ) = \sup \{
\ord (p, \R) : (p\in X \}$. Finally, a family $\R$ is is {\bf
point-$\kappa$} if $\ord (\R) \leq \kappa$, i.e. if every point
belongs to at most $\kappa$ members of $\R$.

\begin{theorem}
Suppose $X$ is a regular initially $\kappa$-compact space with
$\pi\chi (X) = \kappa$ and no free sequences of length $\kappa^+$
(that is $\F(X) \leq \kappa$). Then any Shapirovskii $\pi$-base is
point-$\kappa$.
\end{theorem}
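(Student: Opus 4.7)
My plan is to argue by contradiction. Suppose some $x \in X$ satisfies $\ord(x, \S) \geq \kappa^+$; I aim to construct a free sequence of length $\kappa^+$ in $X$, contradicting $\F(X) \leq \kappa$. Recall that Fact 2.5(a) gives $t(X) \leq \kappa$ for free, and Lemma 2.4 reformulates $\F(X) \leq \kappa$ as a covering property.

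Let $A = \{a \in \lambda \times \kappa : x \in S_a\}$, so $|A| \geq \kappa^+$. Since each fibre $\{\alpha\} \times \kappa$ has size $\kappa$, the projection $B = \pi_1(A)$ has $|B| = \kappa^+$; thin $B$ to an increasing $\kappa^+$-sequence $\langle \alpha_\xi : \xi < \kappa^+ \rangle$ and pick $i_\xi$ with $\langle \alpha_\xi, i_\xi \rangle \in A$, writing $V_\xi := S_{\alpha_\xi, i_\xi}$. Using the $\sigma_\kappa$-normal-form arithmetic of Section 1, select an increasing sequence $\langle \delta_\xi : \xi < \kappa^+ \rangle$ of ordinals of the form $\kappa \cdot \epsilon$ with $\gamma(\delta_\xi) \geq \sup_{\eta<\xi} \delta_\eta$ and $B \cap [\gamma(\delta_\xi), \delta_\xi) \neq \emptyset$ at every stage. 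For each $\xi$, condition (c) applied to the singleton $\{\langle \alpha, i_\alpha \rangle\}$ with $\alpha \in B \cap [\gamma(\delta_\xi), \delta_\xi)$ yields a point $y_\xi \in \overline{P_{\delta_\xi}} \cap \overline{V_{\beta_\xi}}$ for some $\beta_\xi \in B \cap [\gamma(\delta_\xi), \delta_\xi)$.

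For $\eta < \xi$, the bound $\beta_\xi \geq \gamma(\delta_\xi) \geq \delta_\eta$ together with condition (b) yields $\overline{P_{\delta_\eta}} \cap \overline{V_{\beta_\xi}} = \emptyset$, hence $y_\xi \notin \overline{P_{\delta_\eta}}$; meanwhile $y_\eta \in \overline{P_{\delta_\eta}}$. Writing $\delta_\xi^* := \sup_{\eta<\xi} \delta_\eta$, one verifies that $\overline{\{y_\eta : \eta < \xi\}} \subseteq \overline{P_{\delta_\xi^*}}$ while $\{y_\eta : \eta \geq \xi\} \cap \overline{P_{\delta_\xi^*}} = \emptyset$ (the case $\eta = \xi$ uses $\beta_\xi \geq \delta_\xi^*$). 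The concluding move is to upgrade this to the genuine free-sequence condition: at each $\xi$ find an open $U_\xi \supseteq \overline{P_{\delta_\xi^*}}$ missing $\{y_\eta : \eta \geq \xi\}$; then the sufficiency-direction argument of Lemma 2.4 delivers $\overline{\{y_\eta : \eta < \xi\}} \cap \overline{\{y_\eta : \eta \geq \xi\}} = \emptyset$, witnessing a free sequence of length $\kappa^+$.

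The proof has two main hurdles. The first is the combinatorial arrangement of $\langle \delta_\xi \rangle$: these must simultaneously be of the form $\kappa \cdot \epsilon$, carry $B$ through their $[\gamma(\delta_\xi), \delta_\xi)$ intervals, and respect $\gamma(\delta_\xi) \geq \delta_\xi^*$ — exactly the configuration that the normal-form calculus of Theorem 1.5 is designed to supply, but bookkeeping its existence at all stages requires care. The second, and genuinely topological, hurdle is producing the single open separator $U_\xi$ from pointwise regularity: for each $\eta \geq \xi$, regularity gives an open $U_{\xi,\eta} \supseteq \overline{P_{\delta_\xi^*}}$ disjoint from a neighbourhood of $y_\eta$, but the $\kappa^+$-fold intersection of the $U_{\xi,\eta}$ need not be open. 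Collapsing it into a single open $U_\xi$ must use the covering form of $\F(X) \leq \kappa$ from Lemma 2.4 — naturally applied inside the closed subspace $\overline{P_{\delta_\xi}}$, which inherits both initial $\kappa$-compactness and tightness — together with $t(X) \leq \kappa$; this is where I expect the most delicate reasoning to live.
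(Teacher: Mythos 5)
There is a genuine gap, and it is not the ``bookkeeping'' you flag as merely requiring care --- it is an impossibility. Your construction needs $\kappa^{+}$ many ordinals $\delta_\xi$ of the form $\kappa\cdot\epsilon$ with $\gamma(\delta_\xi)\geq\sup_{\eta<\xi}\delta_\eta$ and $B\cap[\gamma(\delta_\xi),\delta_\xi)\neq\emptyset$, i.e.\ $\kappa^{+}$ many pairwise disjoint intervals of the form $[\gamma(\delta),\delta)$ below $\lambda$, each meeting $B$. The function $\gamma_\kappa$ does not supply this. Already when $\lambda=\kappa^{+}$: for every $\delta$ with $\kappa\leq\delta<\kappa^{+}$ one has $|\sigma_\kappa(\alpha)|=\kappa$ for all $1\leq\alpha<\kappa^{+}$, so the $\sigma_\kappa$-normal form of $\delta$ has length $n=1$ and hence $\gamma(\delta)=0$. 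Thus every relevant interval is $[0,\delta)$; these are nested, not disjoint, and your requirement $\gamma(\delta_1)\geq\delta_0>0$ already fails at $\xi=1$. The canonical function is designed to be used \emph{once}: at the single critical $\delta$ (the least ordinal below which $\kappa^{+}$ many first coordinates of your index set accumulate, which automatically has $\cf(\delta)=\kappa^{+}$), the interval $[\gamma(\delta),\delta)$ absorbs a tail of those coordinates. It cannot be iterated transfinitely in the way your free-sequence construction demands.

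The paper's proof is architecturally different and avoids building a free sequence at the top level. It fixes that one critical $\delta$, uses Fact 2.5(a) and $\cf(\delta)=\kappa^{+}$ to write $\overline{P_\delta}=\bigcup_{\alpha<\delta}\overline{P_\alpha}$, uses property (b) to see that $\{X\setminus\overline{Q}:Q\in\Q\}$ covers $\overline{P_\delta}$ in the strong sense required by Lemma 2.4(b), applies Lemma 2.4 with $Y=\overline{P_\delta}$ to get a $\kappa$-sized subcover, and then --- this is the step your outline has no counterpart for --- uses initial $\kappa$-compactness of $\overline{P_\delta}$ to extract a \emph{finite} subcover. That finiteness is what makes condition (c) applicable (it only speaks about finite $A$), and (c) then forces $\bigcap_{a\in A}\overline{S_a}=\emptyset$, hence $\bigcap\R=\emptyset$. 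You apply (c) only to singletons, which is why you never need the finite subcover, but the price is the nonexistent family of disjoint intervals; and the concluding ``upgrade'' to a genuine free sequence via inline use of Lemma 2.4's sufficiency argument is left as a sketch precisely where the real work would have to happen. I would redo the proof along the paper's lines: one critical $\delta$, Lemma 2.4 as a black box, then compactness, then (c).
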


\begin{proof}
Let $\S$ be a Shapirovskii $\pi$-base for $X$, as displayed in the
definition, and suppose that $\R \subseteq \S$, $|\R| = \kappa^+$.
We have to show that $\bigcap \R = \emptyset$.

1) For some $I \in [\lambda \times \kappa]^{\kappa^+}$, $\R =
\{S_{\alpha , i} : <\alpha , i> \in I \}$, and so $|\pi_0 `` I|
=\kappa^+$, where $\pi_0$ denotes the projection from the square
to the first coordinate, $\pi_0 (<a,b>)= a$.

2) Pick $\delta\in\lambda$, the least ordinal such that  $|(\pi_0
``I) \bigcap \delta| =\kappa^+$. Then $\cf (\delta) =\kappa^+$.
Let $J = I \bigcap ([\gamma (\delta), \delta ) \times \kappa)$.
Then $|\pi_0``J| = \kappa^+$, because $\gamma(\delta) < \delta$.

Let $\Q = \{S_{\alpha , i} : <\alpha, i > \in J \} \subseteq \R$.

3) By Fact 2.5 (a), $t(X) \leq \kappa$.

4) Since $t(X) \leq \kappa < \kappa^+ = \cf (\delta)$,
$\overline{P_\delta} = \bigcup \{ \overline{P_\alpha} : \alpha <
\delta\}$.

5) Since, by the choice of $\delta$, $\pi_0``J$ is cofinal in
$\delta$, $\bigcap \{ \overline{Q} : Q\in \Q \} \bigcap
\overline{P_\delta} = \emptyset$. This uses 4) above and the
property (b) of Definition 2.1.

6) Therefore, $\U = \{ X \setminus \overline{Q} : Q \in \Q \}$ is
an open cover of $\overline{P_\delta}$, and $(\forall A \in
[\overline{P_\delta} ]^{\leq\kappa})$ $(\exists Q \in \Q )$ with
$\overline{ A} \subseteq X\setminus \overline{Q}$.

7) Since $\F(X) \leq \kappa$, Lemma 2.4 applies (with $Y =
\overline{P_\delta}$), and $\exists \V \subseteq \U$, $|\V| \leq
\kappa$, such that $\V$ is also a cover of $\overline{P_\delta}$.

8) Since $ \overline{P_\delta}$ is initially $\kappa$-compact,
there if a finite $\W \subseteq \V \subseteq \U$ which is a cover
of $\overline{P_\delta}$. Say, $\W = \{ X\setminus \overline{ S_a}
: a \in A\}$, for some finite $A \subseteq J$, and so we have
$(\bigcap_{a \in A} \overline{S_a}) \bigcap \overline{P_\delta} =
\emptyset$.

9) Since $\delta$ with $\cf (\delta) =\kappa^+$ is {\it a
fortiori} of the form $\delta=\kappa\cdot\epsilon$ for some
$\epsilon$, this implies (by the property (c) in the Definition
2.1) that $\bigcap_{a \in A} \overline{S_a} = \emptyset$, and
therefore $\bigcap_{a \in A} S_a = \emptyset$. But $\{ S_a : a \in
A \} \subseteq \Q \subseteq \R$, and so $\bigcap \R = \emptyset$.
\end{proof}

The numbering of points in this proof, and in some of our other
proofs, hopefully, helps to see exactly what ideas are involved
(and seem needed, but maybe are not), at each point of the
argument.

\begin{corollary}
Every regular countably compact space with countable
$\pi$-character and no uncountable free sequences has a
point-countable $\pi$-base.
\end{corollary}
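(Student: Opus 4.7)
The plan is to read off the corollary as an immediate consequence of Theorems 2.2 and 2.6 applied at $\kappa = \omega$. First I would verify the translation of hypotheses: ``countably compact'' coincides with ``initially $\omega$-compact'' by definition; ``countable $\pi$-character'' reads as $\pi\chi(X) \leq \omega$; and ``no uncountable free sequences'' is precisely $\F(X) \leq \omega$. These match the hypotheses of Theorem 2.6 in the case $\kappa = \omega$.

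Having matched the hypotheses, I would apply Theorem 2.2 to the regular space $X$ to produce a Shapirovskii $\pi$-base $\S = \{S_{\alpha,i} : \langle \alpha, i\rangle \in \lambda \times \omega\}$, where $\lambda = d(X)$ and the slot-cardinal $\kappa$ is taken to be $\omega$ (permissible since $\pi\chi(X) \leq \omega$). Theorem 2.6 then asserts that $\S$ is point-$\omega$, i.e., point-countable, and the conclusion follows.

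No step is a genuine obstacle: all the technical work has been absorbed into Theorems 2.2 and 2.6, and the role of this corollary is precisely to exhibit the classical countable case as an instance of the general framework set up in Section 2. The only mild subtlety worth acknowledging is the degenerate possibility $\pi\chi(X) < \omega$ (i.e., $X$ has a dense set of isolated points), which is absorbed by reading Definition 2.1 with slot-cardinal $\omega$ and padding the finite local $\pi$-bases with repetitions; Theorem 2.6 still applies verbatim and delivers the same conclusion.
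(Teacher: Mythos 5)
Your proposal is correct and is exactly the paper's intended derivation: Corollary 2.7 is stated immediately after Theorem 2.6 as the instance $\kappa=\omega$, with Theorem 2.2 supplying the Shapirovskii $\pi$-base. Your remark about the degenerate case $\pi\chi(X)<\omega$ is a harmless extra precaution that the paper leaves implicit.
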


This corollary gives a partial positive answer to Problem 4.4 of
[7].

The core of our argument, in fact, also proves the following
variations.

\begin{corollary}
Every regular initially $t(X)^+$-compact space with $\kappa =$
max$\{\pi\chi (X), t(X) \}$ has a point-$\kappa$ $\pi$-base.
\end{corollary}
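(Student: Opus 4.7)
The plan is to copy the proof of Theorem 2.6 almost verbatim, lowering the cardinal parameter from $\kappa$ to $t(X)$ at two places: the invocation of Lemma 2.4 and the finite-subcover reduction. The enabling preliminary observation is that Fact 2.5(b), applied at cardinal $t(X)$ (together with the tautology $t(X)\leq t(X)$), turns initial $t(X)^+$-compactness into the bound $\F(X)\leq t(X)\leq\kappa$. In particular $\F(X)\leq\kappa$, so Theorem 2.2 supplies a Shapirovskii $\pi$-base $\S$ with accompanying left-separated dense $P=\{p_\alpha:\alpha<\lambda\}$.

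Fix $\R\subseteq\S$ with $|\R|=\kappa^+$; we must show $\bigcap\R=\emptyset$. Following Steps 1--2 of the proof of 2.6 verbatim, write $\R=\{S_{\alpha,i}:(\alpha,i)\in I\}$, choose $\delta<\lambda$ least with $|(\pi_0``I)\cap\delta|=\kappa^+$ (so $\cf(\delta)=\kappa^+$), set $J=I\cap([\gamma(\delta),\delta)\times\kappa)$ and $\Q=\{S_{\alpha,i}:(\alpha,i)\in J\}$; then $\pi_0``J$ is cofinal in $\delta$. Since $t(X)\leq\kappa<\kappa^+=\cf(\delta)$, the reasoning of Steps 4--5 still gives $\overline{P_\delta}=\bigcup_{\alpha<\delta}\overline{P_\alpha}$ and consequently $\U=\{X\setminus\overline Q:Q\in\Q\}$ is an open cover of $\overline{P_\delta}$.

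The substantive change is in Step 7: I apply Lemma 2.4 with its cardinal parameter set to $t(X)$ rather than $\kappa$, which is legal because $\F(X)\leq t(X)$. To verify the hypothesis, take $A\in[\overline{P_\delta}]^{\leq t(X)}$; for each $a\in A$ pick $Z_a\subseteq P_\delta$ with $|Z_a|\leq t(X)$ and $a\in\overline{Z_a}$ (by $t$-tightness). Then $B=\bigcup_{a\in A}Z_a\subseteq P_\delta$ has $|B|\leq t(X)<\cf(\delta)$, so $B\subseteq P_\alpha$ for some $\alpha<\delta$ and $\overline A\subseteq\overline{P_\alpha}$. Cofinality of $\pi_0``J$ in $\delta$ yields $(\beta,i)\in J$ with $\beta\geq\alpha$, and property (b) of Definition 2.1 then gives $\overline A\subseteq X\setminus\overline{S_{\beta,i}}$, as required.

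Lemma 2.4 now produces $\V\subseteq\U$ of size $\leq t(X)$ covering the closed subset $\overline{P_\delta}$; initial $t(X)^+$-compactness of $X$ (inherited by $\overline{P_\delta}$) prunes $\V$ to a finite subcover $\W=\{X\setminus\overline{S_a}:a\in A_0\}$ for some finite $A_0\subseteq J$. Hence $(\bigcap_{a\in A_0}\overline{S_a})\cap\overline{P_\delta}=\emptyset$, and since $\delta=\kappa\cdot\epsilon$, property (c) of Definition 2.1 forces $\bigcap_{a\in A_0}\overline{S_a}=\emptyset$, whence $\bigcap\R=\emptyset$. The only genuine obstacle is the recognition that $t(X)\leq\kappa<\cf(\delta)$ supplies exactly the slack needed to run the Lemma 2.4 input at the lower parameter $t(X)$; after that, all that is left is bookkeeping.
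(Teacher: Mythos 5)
Your proposal is correct and is exactly the adaptation the paper intends: the corollary is stated with only the remark that ``the core of our argument also proves'' it, and you have filled in the two genuine modifications --- deriving $\F(X)\leq t(X)$ from Fact 2.5(b) via the $(t(X),t(X)^+]$-compactness implied by initial $t(X)^+$-compactness, and running Lemma 2.4 at the parameter $t(X)$ (with the tightness argument verifying its hypothesis) so that the resulting $t(X)$-sized subcover can be reduced to a finite one using only initial $t(X)^+$-compactness. The remaining steps track the proof of Theorem 2.6 verbatim, as they should.
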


\begin{corollary}
Every first-countable initially $\omega_1$-compact regular space
has a point-countable $\pi$-base.
\end{corollary}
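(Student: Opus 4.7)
The plan is to derive Corollary 2.9 as a direct application of Corollary 2.8, so the main task is simply to check that every hypothesis of the latter is satisfied for a first-countable, initially $\omega_1$-compact, regular space.

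First I would observe that first-countability of $X$ (i.e.\ $\chi(X)\le\omega$) immediately yields both $\pi\chi(X)\le\omega$ and $t(X)\le\omega$, since character dominates $\pi$-character and tightness: every local base is a local $\pi$-base, and in a first-countable space closures are computed along countable sequences. Thus the cardinal $\kappa := \max\{\pi\chi(X),\ t(X)\}$ of Corollary 2.8 is at most $\omega$.

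Next I would verify that $X$ is initially $t(X)^+$-compact. Since $t(X)\le\omega$, we have $t(X)^+\le\omega_1$; hence any open cover of $X$ of cardinality at most $t(X)^+$ is {\it a fortiori} of cardinality at most $\omega_1$, and so admits a finite subcover by the standing assumption of initial $\omega_1$-compactness. Regularity of $X$ is given. Applying Corollary 2.8 to $X$ with $\kappa\le\omega$, I conclude that $X$ has a point-$\kappa$, and hence point-countable, $\pi$-base.

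There is no real obstacle to this argument: all the substantive work has already been packaged into Corollary 2.8 (and, through it, into Theorem 2.6, Fact 2.5(b), and Lemma 2.4). If one preferred to bypass Corollary 2.8 and appeal to Corollary 2.7 instead, the only additional check would be $\F(X)\le\omega$, which falls out of Fact 2.5(b) since initial $\omega_1$-compactness trivially entails $(\omega,\omega_1]$-compactness and first-countability gives $t(X)\le\omega$; but invoking Corollary 2.8 directly is the cleanest route.
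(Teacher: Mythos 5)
Your derivation is correct and matches the paper's intent: the paper offers Corollary 2.9 as one of the ``variations'' following from the core argument (Theorem 2.6 together with Fact 2.5), and your reduction --- first-countability gives $\pi\chi(X)\le\omega$ and $t(X)\le\omega$, initial $\omega_1$-compactness gives initial $t(X)^+$-compactness (and, via Fact 2.5(b), $\F(X)\le\omega$) --- is exactly the verification required to invoke Corollary 2.8 or, equivalently, Corollary 2.7. Nothing is missing.
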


\begin{corollary}
Let $\kappa =$ max$\{\pi\chi (X), t(X) \}$. If $d(X) \leq
\kappa^+$, then $X$ has a point-$\kappa$ $\pi$-base.
\end{corollary}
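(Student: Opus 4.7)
The plan is to apply Theorem 2.2 to produce a Shapirovskii $\pi$-base $\S = \{S_{\alpha,i} : \langle \alpha,i\rangle \in \lambda \times \kappa\}$ attached to a left-separated dense $P = \{p_\alpha : \alpha < \lambda\}$ with $\lambda = d(X) \leq \kappa^+$, and to verify that this $\S$ is itself already point-$\kappa$. If $\lambda \leq \kappa$ then $|\S| \leq \kappa$ and the conclusion is immediate, so I assume $\lambda = \kappa^+$, fix any $\R \subseteq \S$ with $|\R| = \kappa^+$, and aim to show $\bigcap \R = \emptyset$.

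Write $\R = \{S_{\alpha,i} : \langle \alpha,i\rangle \in I\}$ for some $I \in [\kappa^+ \times \kappa]^{\kappa^+}$. Since the fibers of the first-coordinate projection $\pi_0$ have cardinality at most $\kappa$, $J := \pi_0``I$ also has cardinality $\kappa^+$ and is therefore cofinal in $\kappa^+$. Assume toward a contradiction that some point $p$ belongs to $\bigcap \R$. Density of $P$ gives $p \in \overline{P}$, and $t(X) \leq \kappa$ produces a set $B \in [P]^{\leq\kappa}$ with $p \in \overline{B}$. By regularity of $\kappa^+$, $B \subseteq P_{\alpha_0}$ for some $\alpha_0 < \kappa^+$, so $p \in \overline{P_{\alpha_0}}$. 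Cofinality of $J$ in $\kappa^+$ now supplies $\langle \beta, i\rangle \in I$ with $\beta \geq \alpha_0$, and then $p \in S_{\beta,i} \subseteq \overline{S_{\beta,i}}$; this contradicts clause (b) of Definition 2.1, which forces $\overline{P_{\alpha_0}} \cap \overline{S_{\beta,i}} = \emptyset$.

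The real content of the corollary, rather than any technical obstacle, is the observation that when $d(X) \leq \kappa^+$ the whole argument of Theorem 2.6 collapses to a single use of clause (b). In that theorem the critical ordinal $\delta$ of cofinality $\kappa^+$ is chosen strictly below $\lambda$, which is why clause (c) together with Lemma 2.4 and initial $\kappa$-compactness must be brought in to shrink a $\kappa^+$-cover of $\overline{P_\delta}$ down to a finite one. Here $\lambda$ itself is $\kappa^+$, so $\delta$ effectively coincides with $\lambda$ and $\overline{P_\delta}$ with all of $X$; the separation already built into a Shapirovskii $\pi$-base via clause (b) is strong enough to conclude directly, and neither clause (c) nor any compactness hypothesis on $X$ is required.
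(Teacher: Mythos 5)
Your proof is correct and is essentially the argument the paper intends: the paper offers no separate proof of this corollary, remarking only that ``the core of our argument'' (the proof of Theorem 2.6) yields it, and your argument is exactly steps 1)--5) of that proof specialized to the case $\lambda=\kappa^+$, where the critical ordinal $\delta$ of cofinality $\kappa^+$ must equal $\lambda$ itself, $\overline{P_\delta}=X$, and clause (b) plus tightness finish the job. Your closing paragraph correctly identifies why clause (c), Lemma 2.4, and all compactness hypotheses become unnecessary here, which is precisely the content of the paper's unstated reduction.
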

This is, in essence, Tkachuk's theorem 3.2 of [7].

\begin{corollary}
Suppose $X$ is a regular space which is initially $\F
(X)$-compact. Let $\kappa =$ max$\{\F (X) , \pi\chi (X) \}$. Then
$X$ has a point-$\kappa$ $\pi$-base. In fact, any Shapirovskii
$\pi$-base is point-$\kappa$.
\end{corollary}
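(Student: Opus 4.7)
The plan is to run the proof of Theorem 2.6 essentially verbatim, with the single refinement that the two compactness-type invocations are carried out at parameter $\kappa_0 := \F(X)$ rather than at $\kappa$. By Theorem 2.2, $X$ admits a Shapirovskii $\pi$-base $\S = \{S_{\alpha,i} : \langle\alpha,i\rangle \in \lambda \times \pi\chi(X)\}$, and we must show $\ord(\S) \leq \kappa$. Suppose for contradiction that $\R \subseteq \S$ is indexed by $I \subseteq \lambda \times \pi\chi(X)$ with $|\R| = \kappa^+$. Since each fiber of $\pi_0$ has size $\leq \pi\chi(X) \leq \kappa$, $|\pi_0``I| = \kappa^+$; pick the least $\delta < \lambda$ with $|(\pi_0``I) \cap \delta| = \kappa^+$, so $\cf(\delta) = \kappa^+$. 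Set $J = I \cap ([\gamma(\delta),\delta) \times \pi\chi(X))$ and $\Q = \{S_a : a \in J\}$; by minimality of $\delta$, $\pi_0``J$ is cofinal in $\delta$.

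Applying Fact 2.5(a) at parameter $\kappa_0 = \F(X)$ (legitimate: $X$ is initially $\F(X)$-compact and $\F(X) \leq \kappa_0$ trivially), we get $t(X) \leq \F(X) \leq \kappa < \kappa^+ = \cf(\delta)$, and so $\overline{P_\delta} = \bigcup_{\alpha < \delta} \overline{P_\alpha}$. Combined with condition (b) of Definition 2.1 and cofinality of $\pi_0``J$ in $\delta$, this gives $\bigcap\{\overline{Q} : Q \in \Q\} \cap \overline{P_\delta} = \emptyset$, so $\U = \{X \setminus \overline{Q} : Q \in \Q\}$ is an open cover of $\overline{P_\delta}$. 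The cover-hypothesis of Lemma 2.4 at level $\kappa_0$ follows a fortiori from the one at level $\kappa$ in the proof of Theorem 2.6; hence Lemma 2.4 supplies a subcover $\V \subseteq \U$ of size $\leq \F(X)$, and initial $\F(X)$-compactness of the closed set $\overline{P_\delta}$ then extracts a finite $\W = \{X \setminus \overline{S_a} : a \in A\}$ for some finite $A \subseteq J$. Thus $\bigl(\bigcap_{a \in A} \overline{S_a}\bigr) \cap \overline{P_\delta} = \emptyset$.

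Finally, $\cf(\delta) = \kappa^+ > \pi\chi(X)$ forces $\delta = \pi\chi(X) \cdot \epsilon$ for some $\epsilon$ (a nonzero remainder $\rho < \pi\chi(X)$ would give $\cf(\delta) = \cf(\rho) \leq \pi\chi(X)$, absurd), so property (c) of Definition 2.1 applies and yields $\bigcap_{a \in A} \overline{S_a} = \emptyset$, whence $\bigcap \R = \emptyset$. The main subtlety is bookkeeping with the two parameters $\F(X) \leq \kappa$: the indexing of $\S$, the refutation size $\kappa^+$, and the cofinality $\cf(\delta) = \kappa^+$ all live at level $\kappa$, whereas the two compactness invocations --- Fact 2.5(a) to secure $t(X) \leq \kappa$ and the final reduction from a $\kappa_0$-sized subcover to a finite one --- must both be executed at the smaller level $\F(X)$ that the hypothesis actually provides.
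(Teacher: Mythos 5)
Your proposal is correct and matches the paper's intent exactly: the paper offers no separate proof, stating only that ``the core of our argument'' (the proof of Theorem 2.6) also yields this corollary, and your rerun of that proof with the two compactness invocations (Fact 2.5(a) and the final finite-subcover extraction) executed at level $\F(X)$ while the counting, the cofinality of $\delta$, and the appeal to conditions (b) and (c) stay at level $\kappa=\max\{\F(X),\pi\chi(X)\}$ is precisely the required parameter bookkeeping. The justification that $\cf(\delta)=\kappa^+>\pi\chi(X)$ forces $\delta$ to be a multiple of $\pi\chi(X)$, so that condition (c) of Definition 2.1 applies, is also correct.
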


\begin{corollary}
Every regular countably compact space with no uncountable free
sequences has a point-$\pi\chi (X)$  $\pi$-base.
\end{corollary}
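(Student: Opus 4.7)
The plan is to derive this corollary as an essentially immediate consequence of Corollary 2.11. Setting $\kappa = \pi\chi(X)$, I interpret the two hypotheses as follows: ``no uncountable free sequences'' is precisely $\F(X) \leq \omega$, and ``countably compact'' is ``initially $\omega$-compact'' in the terminology introduced just before Fact 2.5.

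First I would dispose of the degenerate situation $\pi\chi(X) < \omega$: then every point of $X$ is isolated, so $X$ is discrete, and countable compactness forces $X$ to be finite; here the family of singletons is a point-$1$ $\pi$-base and the conclusion is trivial. From this point on I may assume $\pi\chi(X) \geq \omega$, and hence $\F(X) \leq \omega \leq \pi\chi(X)$, so that $\kappa = \max\{\F(X), \pi\chi(X)\} = \pi\chi(X)$.

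Next I verify that $X$ is initially $\F(X)$-compact. If $\F(X)$ is finite this is automatic, since a cover of cardinality $\leq \F(X)$ is already finite. Otherwise $\F(X) = \omega$, and initial $\omega$-compactness is precisely the assumed countable compactness. Both hypotheses of Corollary 2.11 are thereby in force, and it yields a point-$\kappa$ $\pi$-base, which is the desired point-$\pi\chi(X)$ $\pi$-base.

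The only subtlety lies in the cardinal bookkeeping across the edge cases ($\pi\chi(X)$ possibly finite, $\F(X)$ possibly finite); there is no genuine obstacle, since all the analytical work --- the passage from a Shapirovskii $\pi$-base to the point-$\kappa$ bound via the free-sequence reformulation of Lemma 2.4 --- has already been done inside Theorem 2.6 and then packaged by Corollary 2.11.
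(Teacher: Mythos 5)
Your derivation is correct and is essentially the paper's own route: Corollary 2.12 is exactly the specialization of Corollary 2.11 to $\F(X)\leq\omega$ (where countable compactness gives initial $\F(X)$-compactness and $\max\{\F(X),\pi\chi(X)\}=\pi\chi(X)$), which the paper presents without proof as a variation of the core argument of Theorem 2.6. The extra bookkeeping for finite $\pi\chi(X)$ is harmless and does not change the substance.
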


In the presence of a nice cardinal arithmetic, the covering
restrictions can be altogether omitted, but only when the density
of the space is not too large.

\begin{theorem}
Suppose that $\kappa$ and $\lambda$ are cardinals such that
$(\kappa^+)^\kappa  = \kappa^+$ and, for every $\mu$ with
$\kappa^+ \leq\mu < \lambda$, $\mu^\kappa=\mu$. Then every regular
space $X$ with $\pi\chi (X) \leq \kappa$, $\F(X)\leq \kappa$  and
$d(X)\leq \lambda$ has a point-$\kappa$ $\pi$-base.
\end{theorem}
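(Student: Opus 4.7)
My plan is to show that the $\kappa$-strong Shapirovskii $\pi$-base $\S$ produced by Theorem~2.3 (applicable because the cardinal-arithmetic hypotheses of~2.13 coincide with those of Theorem~1.7) is itself point-$\kappa$. The argument will run parallel to the proof of Theorem~2.6, but with one crucial substitution: the $\kappa$-sized index sets of (c*) replace the finite ones of (c), and this lets Lemma~2.4 alone do the work that was shared between Lemma~2.4 and initial $\kappa$-compactness in Theorem~2.6. As a consequence, neither the passage from a $\leq\kappa$-subcover to a finite subcover nor the intermediate tightness reduction produced by Fact~2.5(a) is required.

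Concretely, I assume $\R = \{S_{\alpha,i} : (\alpha,i) \in I\} \subseteq \S$ with $|\R| = \kappa^+$ and work toward $\bigcap \R = \emptyset$. The first two steps of the proof of Theorem~2.6 go through verbatim: $|\pi_0`` I| = \kappa^+$, and taking $\delta$ minimal with $|\pi_0`` I \cap \delta| = \kappa^+$ forces $\cf(\delta) = \kappa^+$. Such a $\delta$ is automatically a multiple of $\kappa^+$ (writing $\delta = \kappa^+\cdot\epsilon + \rho$ with $0 \leq \rho < \kappa^+$, any $\rho > 0$ would give $\cf(\delta) = \cf(\rho) < \kappa^+$), so (c*) applies at $\delta$; set $J := I \cap ([\gamma(\delta),\delta)\times\kappa)$, still with $\pi_0`` J$ of cardinality $\kappa^+$ and cofinal in $\delta$.

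The engine of the argument is Lemma~2.4, applied to $Y := \bigcup_{\alpha<\delta}\overline{P_\alpha}$ (rather than to $\overline{P_\delta}$, as one might naturally try in imitation of Theorem~2.6) and $\U := \{X\setminus\overline{S_a} : a \in J\}$. Condition~(b) of Definition~2.1 combined with cofinality of $\pi_0`` J$ in $\delta$ shows $\U$ covers $Y$; and because $\cf(\delta) = \kappa^+ > \kappa$, every $A \in [Y]^{\leq\kappa}$ fits inside a single $\overline{P_\gamma}$ with $\gamma < \delta$, whereupon (b) again places $\overline{A}$ inside some member of $\U$. Using $\F(X) \leq \kappa$, Lemma~2.4 delivers $\V = \{X\setminus\overline{S_a} : a \in A\}$ with $A \subseteq J$, $|A| \leq \kappa$, that covers $Y$. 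Padding $A$ with arbitrary elements of $J$ to some $A' \in [J]^{\kappa}$ only shrinks the intersection, giving $\bigcap_{a\in A'}\overline{S_a} \cap \bigcup_{\alpha<\delta}\overline{P_\alpha} = \emptyset$.

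The closing step is to apply (c*), read in the stronger ``union'' form that the construction of Theorem~2.3 in fact delivers (just as in Definition~2.1(c), any point forced into the intersection at some stage $\eta$ belongs to $P_{\eta+1} \subseteq \bigcup_{\alpha<\delta}\overline{P_\alpha}$). This form says that non-emptiness of $\bigcap_{a\in A'}\overline{S_a}$ would force it to meet $\bigcup_{\alpha<\delta}\overline{P_\alpha}$; combined with the previous paragraph, this yields $\bigcap_{a\in A'}\overline{S_a} = \emptyset$, whence $\bigcap \R \subseteq \bigcap_{a\in A'} S_a = \emptyset$, as required. The main obstacle I anticipate lies exactly here, in making the ``union'' refinement of (c*) explicit: the literal wording of (c*) gives only $\bigcap \overline{S_a} \cap \overline{P_\delta} \neq \emptyset$, and from that alone the covering argument would produce merely a point in $\overline{P_\delta} \setminus \bigcup_{\alpha<\delta}\overline{P_\alpha}$, which is not a contradiction absent some tightness-type reduction.
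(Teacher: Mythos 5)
Your proof is correct, and it is precisely the argument the paper intends for Theorem 2.13 (which it states without proof): run the proof of Theorem 2.6 against the $\kappa$-strong Shapirovskii $\pi$-base supplied by Theorem 2.3, with condition (c*) absorbing the two jobs that initial $\kappa$-compactness performed there (the tightness reduction via Fact 2.5(a) and the passage from a $\leq\kappa$-subcover to a finite one), and with Lemma 2.4 applied to $Y=\bigcup_{\alpha<\delta}\overline{P_\alpha}$ rather than to $\overline{P_\delta}$. The one obstacle you flag is resolved exactly as you suggest: since $A'\in[[\gamma(\delta),\delta)\times\kappa]^{\kappa}$ equals $\psi(\eta)$ for some $\eta<\delta$ by Definition 1.2(2), at stage $\eta$ of the construction either $\bigcap_{a\in A'}\overline{S_a}$ already meets $\overline{P_\eta}$ or $p_\eta$ is chosen inside it, and $\eta+1<\delta$ because $\delta$ is a limit, so a nonempty intersection always meets $\bigcup_{\alpha<\delta}\overline{P_\alpha}$ --- i.e.\ (c*) holds in the same ``union'' form in which Definition 2.1(c) is already stated for the finite case, and it would be cleanest to build that form into (c*) from the outset.
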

\begin{corollary}
Under $CH$, every regular space with $ \F(X) = \omega$ and
$d(X)\leq \aleph_\omega$ has a point-$\pi\chi (X)$ $\pi$-base.
\end{corollary}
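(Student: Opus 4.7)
The plan is to imitate the proof of Theorem 2.6, with the cardinal-arithmetic hypothesis substituting for the initial $\kappa$-compactness used there. First I would note that Theorem 1.7 (applicable under the cardinal assumptions of Theorem 2.13) furnishes a $\kappa$-strong $(\kappa,\lambda)$-function $\psi$; feeding this into the {\it mutatis mutandis} version of the construction in the proof of Theorem 2.3 produces a $\kappa$-strong Shapirovskii $\pi$-base $\S = \{S_{\alpha,i} : (\alpha,i) \in \lambda \times \kappa\}$ for $X$. It then suffices to check that $\S$ is point-$\kappa$.

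Fix $\R \subseteq \S$ with $|\R| = \kappa^+$, writing $\R = \{S_a : a \in I\}$. Steps 1)--2) of the proof of Theorem 2.6 apply verbatim to produce $\delta$ with $\cf(\delta) = \kappa^+$ (so in particular $\delta = \kappa^+ \cdot \epsilon$) and $J = I \cap ([\gamma(\delta),\delta) \times \kappa)$ such that $\pi_0``J$ is cofinal in $\delta$ of size $\kappa^+$; set $\Q = \{S_a : a \in J\} \subseteq \R$ and $Y = \bigcup_{\alpha<\delta}\overline{P_\alpha}$. Property (b) together with cofinality of $\pi_0``J$ gives $Y \cap \bigcap\{\overline{Q} : Q \in \Q\} = \emptyset$, and every subset of $Y$ of size at most $\kappa$ lies (via $\cf(\delta) > \kappa$) inside some $\overline{P_\gamma}$ with $\gamma < \delta$, and hence inside some $X\setminus \overline{S_{\beta,i}}$ with $(\beta,i)\in J$. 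This verifies the hypothesis of Lemma 2.4 for the cover $\U = \{X\setminus \overline{Q} : Q \in \Q\}$ of $Y$; since $\F(X) \leq \kappa$, Lemma 2.4 yields $A \in [J]^{\leq\kappa}$ (padded up to size exactly $\kappa$) such that $\{X\setminus \overline{S_a} : a \in A\}$ still covers $Y$. This replaces step 8) of the proof of Theorem 2.6, with the $\kappa$-strong property (c*) now playing the role formerly played by (c) combined with the finite-subcover extraction via initial $\kappa$-compactness.

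The main delicate point, paralleling the passage from (c) to its ``and therefore'' corollary, is the final application of (c*): the cover produces disjointness from $Y$, whereas (c*) as stated concerns $\overline{P_\delta}$. I would handle this by observing that the construction of $\S$ in fact delivers the stronger form of (c*) with $\bigcup_{\alpha<\delta}\overline{P_\alpha}$ in place of $\overline{P_\delta}$, because at the stage $\delta''$ with $\psi(\delta'')=A$, either $\bigcap_{a\in A}\overline{S_a}$ already meets $\overline{P_{\delta''}}$ (and hence meets $Y$), or else $p_{\delta''}$ is chosen inside $\bigcap_{a\in A}\overline{S_a}$, whence $p_{\delta''}\in\overline{P_{\delta''+1}}\subseteq Y$. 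Either way $\bigcap_{a\in A}\overline{S_a}$ would meet $Y$, contradicting the covering above, so $\bigcap_{a\in A}\overline{S_a}=\emptyset$; since $\{S_a:a\in A\}\subseteq \R$, this gives $\bigcap\R=\emptyset$ as required.
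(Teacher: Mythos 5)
The paper gives no proof of this corollary --- it is simply the CH instance of Theorem 2.13, itself stated without proof --- and your reconstruction (Theorem 1.7 $\rightarrow$ a $\kappa$-strong Shapirovskii $\pi$-base via the \emph{mutatis mutandis} version of the construction in Theorem 2.2, then the argument of Theorem 2.6 with $\bigcup_{\alpha<\delta}\overline{P_\alpha}$ replacing $\overline{P_\delta}$, Lemma 2.4 replacing initial $\kappa$-compactness, padding within $J$ so that the $\kappa$-sized witness stays inside $\R$, and (c*) absorbing the finite-subcover step) is exactly the route the paper intends; in particular your observation that the construction really delivers (c*) with $\bigcup_{\alpha<\delta}\overline{P_\alpha}$ in place of $\overline{P_\delta}$ is the correct reading and repairs a genuine imprecision in the stated form of (c*). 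The one caveat, inherited from the paper's own statement rather than introduced by you, is that CH verifies the hypotheses of Theorem 1.7 only for $\kappa=\omega$, so the argument as written literally yields the conclusion when $\pi\chi(X)=\omega$ (i.e.\ Corollary 2.15); for $\pi\chi(X)>\omega$ one needs either further cardinal arithmetic or a separate (easy) remark for the case $\pi\chi(X)\geq d(X)$.
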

\begin{corollary}
Under $CH$, every regular first-countable space with $d(X)\leq
\aleph_\omega$ and no uncountable free sequences has a
point-countable $\pi$-base.
\end{corollary}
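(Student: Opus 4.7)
The plan is to deduce this corollary from Corollary 2.14 with essentially no extra work, by combining it with the elementary inequality $\pi\chi \leq \chi$.

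First, first-countability means $\chi(p,X) \leq \omega$ for every $p \in X$, and since any local base at $p$ is a fortiori a local $\pi$-base at $p$, we obtain $\pi\chi(p,X) \leq \chi(p,X) \leq \omega$, and hence $\pi\chi(X) \leq \omega$. Second, the hypothesis ``no uncountable free sequences'' translates directly to $\F(X) \leq \omega$, so in particular $\F(X) = \omega$ in the nontrivial case $d(X) \geq \omega$ (the finite case being vacuous).

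Now Corollary 2.14 applies: under $CH$, with $\F(X) = \omega$ and $d(X) \leq \aleph_\omega$, the space $X$ has a point-$\pi\chi(X)$ $\pi$-base, and since $\pi\chi(X) \leq \omega$ this is already a point-countable $\pi$-base, as required. Alternatively, one can invoke Theorem 2.13 with $\kappa = \omega$ and $\lambda = \aleph_\omega$; its cardinal arithmetic hypotheses $(\omega^+)^\omega = \omega^+$ and $\mu^\omega = \mu$ for $\aleph_1 \leq \mu < \aleph_\omega$ follow from $CH$ by the standard induction via Hausdorff's formula $\aleph_{n+1}^\omega = \aleph_{n+1} \cdot \aleph_n^\omega$, starting from $\aleph_0^\omega = 2^\omega = \aleph_1$.

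There is no genuine obstacle here: the statement is essentially the conjunction of the previous corollary with the trivial bound $\pi\chi \leq \chi$. The whole reason for formulating Theorem 2.13 and Corollary 2.14 in terms of the general parameters $\pi\chi(X)$ and $\F(X)$ is precisely so that specializations such as this one (and Corollary 2.9 in the initially-compact setting) reduce to a one-line verification.
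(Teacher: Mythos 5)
Your proposal is correct and matches the paper's intent exactly: the paper offers no separate proof of this corollary, treating it as the immediate specialization of Corollary 2.14 (equivalently, Theorem 2.13 with $\kappa=\omega$, $\lambda=\aleph_\omega$) via $\pi\chi(X)\leq\chi(X)\leq\omega$ and $\F(X)\leq\omega$, with the CH cardinal arithmetic verified just as you do.
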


\section{Counterexamples to weaker assumptions}
The main tool for proving that a space does not have a
point-$\kappa$ $\pi$-base is Shapirovskii's Theorem 3.2 in
[6]\footnote{If we define $m (X) = \min \{\sup \{ (\ord (p, \R))^+
: p\in X \} : \R$ is a $\pi$-base for $X \ \}$, it states that
$d(X) \leq m(X) \cdot s(X)$.} which we will need in the following
weak form:
\begin{itemize}
\item[($\star$)]
``If max$\{\kappa^+ , s(X)\} < d(X)$, then $X$ does not have a
point-$\kappa$ $\pi$-base."
\end{itemize}

This criterion is used in every example below.
\begin{example}
There is, in $ZFC$, a first-countable zero-dimensional
left-separated space $X$ such that $d(X) = |X| \geq (\beth_\omega
)^+$, $hL(X) = \beth_{\omega}$, and hence $ s(X) =\beth_{\omega}$.
\end{example}

By $(\star)$, $X$ cannot have a point-countable $\pi$-base. This
is one of the celebrated generalized $L$-spaces of Stevo
Todor\v{c}evi\'c (cf. Theorem 16 in [8]). This example gives a
negative answer to Problem 4.1 of [7].

\begin{example}
There is, in $ZFC$, a zero-dimensional first-countable space
left-separated in the order-type $\gb$ with no discrete subspace
of size $\gb$. It has a point-countable $\pi$-base if and only if
$\gb = \omega_1$.
\end{example}
This is another $L$-space of Stevo Todor\v{c}evi\'c from the same
paper. In the case of $\gb = \omega_1$, whatever the value of
$\F(X)$ is, the space has a point-countable $\pi$-base by 2.10.

\begin{example}
Consistently, relative to the existence of a supercompact
cardinal, there is a first-countable hereditarily Lindel\"{o}f
(hence with $\F(Y) = s(Y) = \omega$) space $Y$ left-separated in
the order-type $\omega_2 = 2^\omega$, without a point-countable
$\pi$-base.
\end{example}

There is, in $ZFC$, a zero-dimensional space $X$ left-separated in
the order-type $d(X) = |X| = 2^{\beth_{\omega_1}}$ with $\chi (X)
= \omega_1$ and $s(X) = hL(X) = \beth_{\omega_1}$. This is still
another generalized $L$-space of Todor\v{c}evi\'c from the same
paper. By ($\star$), it does not have a point-$\omega_1$
$\pi$-base.

By a result of Magidor (see [3], Corollary 3), $V\models
2^{\beth_{\omega_1}} = (\beth_{\omega_1})^{++}$ is consistent,
relative to the existence of a supercompact cardinal.\footnote{The
author would like to thank Moti Gitik for the following comment:
``It is possible to replace the supercompact cardinal with the
strong cardinal, as was done by Segal and Merimovitch".} Force
with $Fn (\omega,\beth_{\omega_1})$ from $V$ as a ground model.
This will preserve $(\beth_{\omega_1})^+$ (in the form of
$(\aleph_1)^{V[G]}$) and all cardinals above it, while collapsing
all cardinals below it to $\aleph_0$. By a routine computation
(counting names and using the generic collapsing function),
$2^{\aleph_0} = \aleph_2$ in $V[G]$. We claim that the space $X$
from the ground model $V$ will possess in the generic extension
$V[G]$ all the properties of $Y$ stated above. As usual, the
topology of $Y$ is understood to be generated in $V[G]$ from
$\T_X$ as a base.

All topological base properties (i.e. those which can be
formulated in terms of a base and are invariant under choosing a
base) of $X$ will be inherited by $Y$. These include regularity,
left-separated structure, and ``$p$ is a complete accumulation
point of $A$," provided $A$ is in $V$ (even if the cardinal
$|A|^V$ is collapsed). The only property that needs an argument is
the hereditary Lindel\"{o}fness of $Y$. It is sufficient to check
that a set of size $\aleph_1$ in the extension contains a point of
complete accumulation of itself. Now, a set $A$ of cardinality
$\aleph_1$ in $V[G]$ has a name $\AA$ in $V$ indexed by the
ordinals in $(\beth_{\omega_1})^+$. Since our forcing poset has
size $\beth_{\omega_1}$, there is a single condition in $G$ which
evaluates in $V$ $(\beth_{\omega_1})^+$-many points of $\AA$, say
a set $B$. Now a complete accumulation point $b \in B$ of $B$ in
$X$ (which exists by $hL(X) < |B|$ in $V$), as we remarked, is the
same for $B \subseteq A$ in $V[G]$.

This space doesn't have a point-countable $\pi$-base for the
cardinal arithmetic reason alone (since it has $\chi(Y) = \F(Y) =
\aleph_0$ and $|Y| = \aleph_2 \leq \aleph_\omega$). This shows
that the cardinal assumption in the Corollary 2.15 (and {\it a
fortiori} in Theorem 2.13) is necessary. This example also gives a
negative answer to Problems 4.3 and 4.6 of [7].



\begin{thebibliography}{5}
\bibitem[0]{key0} P. S. Alexandroff and P. S. Urysohn, {\it
M\'{e}moire sur les espaces topologiques compacts}, Nederl. Akad.
Wetensch. Proc. Ser. A, {\bf 14} (1929), 1-96.

\bibitem[1]{key1}  R. Engelking, {\it General Topology},
Heldermann Verlag, Berlin 1989.

\bibitem[2]{key2} K. Kunen, {\it Set Theory, An Introduction
to Independence Results}, Elsevier Science Publishers B. V.,
Amsterdam 1980.

\bibitem[3]{key3} M. Magidor, {\it On the singular cardinals problem. I}, Israel
J. Math. {\bf 28} (1977), no. 1-2, 1–31.

\bibitem[4]{key4} B.E. Shapirovskii, {\it On discrete subspaces of topological spaces.
Weight, tightness and Souslin number}, Soviet Math. Dokl. 13
(1972), pp. 215-219. (Translation of Dokl. Akad. Nauk SSSR 202
(1972), pp. 778-782.)

\bibitem[5]{key5} B.E. Shapirovskii, {\it Maps onto Tikhonov Cubes},
Russian Math. Surveys, 35:3 (1980), 145-156.

\bibitem[6]{key6} B.E. Shapirovskii, {\it Cardinal invariants in
Compact Hausdorff Spaces}, Amer. Math. Soc. Transl. (2) Vol. {\bf
134}, 1987, pp. 93-118. (Translation of ``Seminar on General
Topology" (P.S. Alexandrov, editor), Moscow, 1981, pp. 162-187; MR
{\bf 83f}: 54024.

\bibitem[7]{keys7} V.V. Tkachuk, {\it Point-countable $\pi$-bases in
first-countable and similar spaces}, Fund. Math. 186 (2005), pp.
55-69.

\bibitem[8]{keys8} S. Todor\v{c}evi\'c, {\it Remarks on cellularity
in products}, Compositio Math.  57 (1986) pp. 357-372.

\bibitem[9]{keys9} S. Todor\v{c}evi\'c, {\it  Free sequences}, Topology Appl.
35 (1990), no. 2-3, 235-238.

\bibitem[10]{keys10} S. Todor\v{c}evi\'c, {\it Some applications of S and L
combinatorics}, The work of Mary Ellen Rudin (Madison, WI, 1991),
130-167, Ann. New York Acad. Sci., 705, New York Acad. Sci., New
York, 1993.

\end {thebibliography}

\end{document}